\documentclass[a4paper, reqno, 11pt]{amsart}

\usepackage[english]{babel}
\usepackage{amsmath}
\usepackage{mathtools}
\usepackage{mathrsfs}
\usepackage{amssymb}
\usepackage{amsthm}
\usepackage{enumerate}
\usepackage{ifthen}
\usepackage{bbm}
\usepackage{color}
\usepackage{xcolor}
\usepackage{graphicx}
\provideboolean{shownotes} 
\setboolean{shownotes}{true}
\usepackage{hyperref}
\usepackage{comment}
\usepackage{geometry}
\geometry{a4paper,top=2cm,bottom=3cm,left=2cm,right=2cm,
          heightrounded,bindingoffset=5mm}
\usepackage[T1]{fontenc}
\usepackage{esint}

\newcommand{\margnote}[1]{
\ifthenelse{\boolean{shownotes}}%
{\marginpar{\raggedright\tiny\texttt{#1}}}%
{}%
}

\newcommand{\hole}[1]{
\ifthenelse{\boolean{shownotes}}%
{\begin{center} \fbox{ \rule {.25cm}{0cm}
\rule[-.1cm]{0cm}{.4cm} \parbox{.85\textwidth}{
\texttt{#1}} \rule {.25cm}{0cm}}\end{center}}
{}\bigskip
}

\newtheorem{mainthm}{Theorem}

\newtheorem{thm}{Theorem}[section]
\newtheorem{prop}[thm]{Proposition}
\newtheorem{lem}[thm]{Lemma}
\newtheorem{cor}[thm]{Corollary}
\newtheorem{defn}[thm]{Definition}

\theoremstyle{remark}
\newtheorem{rem}[thm]{Remark}

\newcommand{\e}{\varepsilon}		       
\newcommand{\R}{\mathbb{R}}

\newcommand{\N}{\mathbb{N}}
\newcommand{\Z}{\mathbb{Z}}

\newcommand{\dive}{\mathop{\mathrm {div}}}
\newcommand{\curl}{\mathop{\mathrm {curl}}}

\newcommand{\de}{\,\mathrm{d}}

\newcommand{\tr}{\,\mathrm{tr}}

\newcommand{\Qr}{Q_{\delta,N,r}}
\newcommand{\Qd}{Q_{\delta,N,\e}}

\newcommand{\inti}{\fint_{t_i}^{t_{i+1}}}

\numberwithin{equation}{section}

\usepackage{float}   
\usepackage{fancybox}		  
\usepackage{verbatim}

\usepackage{caption}
\usepackage{subcaption}

\newcommand{\schema}[1]{{\bf \sc #1}}
\newenvironment{pschema}[1]{\vspace{3mm}
\noindent\begin{Sbox}\begin{minipage}{.95\columnwidth}\vspace{2mm}\begin{center}{\large \schema{#1}}\vspace{5mm}\\
\begin{minipage}{0.9\textwidth}}{\end{minipage}\end{center}\vspace{2mm}\end{minipage}\end{Sbox}\fbox{\TheSbox}\vspace{3mm}}

\subjclass{34A45, 35A02, 65L07.}
\keywords{Regular Lagrangian flow, transport equation, $\theta$-method, nonsmooth velocity fields, vortex-blob method.}

\begin{document}

\title[$\theta$-method for the flow of nonsmooth velocity fields]{A priori error estimates for the $\theta$-method \\ for the flow of nonsmooth velocity fields}

\author[G. Ciampa]{Gennaro Ciampa}
\address[G.\ Ciampa]{DISIM - Dipartimento di Ingegneria e Scienze dell'Informazione e Matematica\\ Universit\`a  degli Studi dell'Aquila \\Via Vetoio \\ 67100 L'Aquila \\ Italy}
\email[]{\href{gciampa@}{gennaro.ciampa@univaq.it}}

\author[T. Cortopassi]{Tommaso Cortopassi}
\address[T.\ Cortopassi]{Scuola Normale Superiore\\ Piazza dei Cavalieri 7, 56126 Pisa\\ Italy.}
\email[]{\href{tcortopassi@}{tommaso.cortopassi@sns.it}}

\author[G. Crippa]{Gianluca Crippa}
\address[G. Crippa]{Departement Mathematik Und Informatik\\ Universit\"at Basel \\Spiegelgasse 1 \\CH-4051 Basel \\ Switzerland}
\email[]{\href{gianluca.crippa@}{gianluca.crippa@unibas.ch}}

\author[R. D'Ambrosio]{\\Raffaele D'Ambrosio}
\address[R. D'Ambrosio]{DISIM - Dipartimento di Ingegneria e Scienze dell'Informazione e Matematica\\ Universit\`a  degli Studi dell'Aquila \\Via Vetoio \\ 67100 L'Aquila \\ Italy}
\email[]{\href{raffaele.dambrosio@}{raffaele.dambrosio@univaq.it}}
\author[S. Spirito]{Stefano Spirito}
\address[S. Spirito]{DISIM - Dipartimento di Ingegneria e Scienze dell'Informazione e Matematica\\ Universit\`a  degli Studi dell'Aquila \\Via Vetoio \\ 67100 L'Aquila \\ Italy}
\email[]{\href{stefano.spirito@}{stefano.spirito@univaq.it}}

\begin{abstract}
Velocity fields with low regularity (below the Lipschitz threshold) naturally arise in many models from mathematical physics, such as the inhomogeneous incompressible Navier-Stokes equations, and play a fundamental role in the analysis of nonlinear PDEs.
The DiPerna-Lions theory ensures existence and uniqueness of the flow associated with a divergence-free velocity field with Sobolev regularity. In this paper, we establish a priori error estimates showing a logarithmic rate of convergence of numerical solutions, constructed via the $\theta$-method, towards the exact (analytic) flow of a velocity field with Sobolev regularity. In addition, we derive analogous a priori error estimates for Lagrangian solutions of the associated transport equation, exhibiting the same logarithmic rate of convergence. Our theoretical results are supported by numerical experiments, which confirm the predicted logarithmic behavior.
\end{abstract}

\maketitle

\section{Introduction}

In this paper we prove some new a priori error estimates for numerical solutions to the ordinary differential equation with nonsmooth velocity field and for the related linear transport equation. Additionally, we discuss the optimality of such error estimates by means of numerical simulations. As a numerical scheme, we consider the so-called $\theta$-method in a situation where the classical theory for Lipschitz velocity fields fails. More precisely, we consider divergence-free velocity fields with Sobolev regularity in the space variable, in which case a well-posedness theory for the ordinary differential equation and the transport equation is available thanks to the seminal contribution by DiPerna and Lions~\cite{DPL}, later extended by Ambrosio~\cite{Am} to the case of divergence-free velocity fields with bounded variation. 

\medskip

The need to go beyond Lipschitz regularity and to consider vector fields with Sobolev~\cite{DPL} or bounded variation~\cite{Am} regularity arises naturally in the study of nonlinear partial differential equations from mathematical physics. This is especially relevant in applications such as fluid dynamics~\cite{DPM,F, L1, L2}, kinetic theory~\cite{DPL2,DPL3}, and conservation laws~\cite{ABDL}. In the case when the velocity field is divergence-free, the transport equation coincides with the continuity equation, which, for example in the inhomogeneous incompressible Navier-Stokes equations, expresses the physical property of conservation of mass. The failure of Lipschitz regularity has both mathematical and physical motivations. On the mathematical side, energy estimates typically provide control of the velocity field only in an integral sense for its derivatives, rather than uniform bounds. On the physical side, turbulence theory predicts that fluid flows are often highly irregular, chaotic, and exhibit anomalous behaviors incompatible with a smooth regime.
\subsection{The flow of a velocity field and the classical $\theta$-method}
We begin by introducing some notation and recalling basic concepts. Let $b:[0,T]\times\R^{d}\to\R^{d}$ be a given velocity field, whose regularity assumptions will be specified later when needed. Given $T>0$, for any $x_0 \in \R^d$ we consider the Cauchy problem
\begin{equation}\label{eq:ode}
\tag{ODE}
\begin{cases}
\dot{X}(t,x_0)=b(t,X(t,x_0)),\\
X(0,x_0)=x_0 \,.
\end{cases}
\end{equation}
The unknown $X(\cdot,x_0):[0,T]\to\R^d$ is the trajectory of the velocity field $b$ starting at the point $x_0$ at the initial time~$t=0$. The map $X=X(t,x_0)$ as a function of both time and initial position is called the flow of the velocity field $b$. 

A widely used method to numerically compute solutions of~\eqref{eq:ode} is the so-called $\theta$-method, see \cite{dam}.\\
\\
\begin{pschema}{Classical $\theta$-method}
Given a parameter $\theta\in[0,1]$, for any $N\in\N$ define the time step $h\coloneqq T/N$ and consider the net~$\{t_i\}_{i=0}^{N}$ defined by $t_i\coloneqq i\,h$ for $i=0,\ldots,N$. The classical $\theta$-method is given by the following iteration.
\begin{itemize}
\item For any given $x_0 \in \R^d$, set $X_{0}^\theta(x_0)=x_0$. 
\item For $i=0,\ldots,N-1$, given $X_i ^\theta =X_i ^\theta (x_0) $ find $X^\theta_{i+1} =X_{i+1}^\theta (x_0)$ (in the following we omit explicitly writing the dependence on $x_0$ for the sake of  readability) by solving 
\begin{equation}\label{eq:thetamethod}
\frac{X_{i+1}^\theta-X_i^\theta}{h}=(1-\theta)  b(t_i,X_i^\theta )+ \theta  b(t_{i+1} ,X_{i+1}^\theta).
\end{equation}
\end{itemize}
\end{pschema}
\\
The $\theta$-method is a one-parameter family of numerical methods which reduces to the {\em explicit Euler method} for $\theta=0$ and to the {\em implicit Euler method} for~$\theta=1$. If the velocity field $b$ is bounded and globally Lipschitz continuous with Lipschitz constant~$L$, Gr\"onwall's inequality immediately implies the (global) error estimate
\begin{equation}\label{eq:apostlip}
\max_{0 \leq i \leq N}|X(t_i)-X^\theta_i|\lesssim h
\qquad \text{for any~$x_0 \in \R^d$},
\end{equation}
where the implicit constant depends, in particular, on the Lipschitz constant of $b$ and on $T$.
The estimate \eqref{eq:apostlip} can be improved with a $h^2$ at the right-hand side if $\theta=1/2$, since in such a case the method is known to be {\em unconditionally stable} and {\em second-order accurate}.
Thus, the order of convergence of the $\theta$-method may depend on the value of $\theta$, having that
\begin{itemize}
\item for $\theta\neq 1/2$, the method is first-order accurate;
\item for $\theta=1/2$ (i.e., the Crank-Nicolson method), it is second-order accurate.
\end{itemize}
This means that the global error behaves like $\mathcal{O}(h)$ in general, and $\mathcal{O}(h^2)$ only when $\theta=1/2$. However, if the vector field is Lipschitz continuous but not $C^2$, the order of convergence is $\mathcal{O}(h)$ for any $\theta\in[0,1]$.

\subsection{Error estimate for the flow of Sobolev velocity fields} 
Motivated by the applications to nonlinear problems discussed earlier, the main objective of this paper is to derive error estimates analogous to~\eqref{eq:apostlip} in the case of a velocity field that is bounded, divergence-free, and possesses Sobolev regularity in the spatial variable, namely, $b \in L^1 ((0,T) ; W^{1,p} (\R^d)) $ for a given $1<p < + \infty$. Since the velocity field is no longer defined pointwise everywhere (Sobolev functions are in general defined only almost everywhere), it is necessary to introduce a suitable weak notion of flow (the so-called regular Lagrangian flow, see Definition~\ref{def:rlf}). Moreover, in order to set up the iterative scheme in the $\theta$-method, we require an appropriate regularization of the velocity field (see Definition~\ref{def:approx}), in which the regularization parameter $\e$ will be coupled with the time step $h$. Finally, we introduce a suitable variant of the~$\theta$-method to handle the time dependence of the velocity field (see~\eqref{eq:theta4}). Within this framework, we are able to establish the following error estimate.\\
\\
\begin{pschema}{Error estimate for the flow}
Let $b \in L^1 ((0,T) ; W^{1,p} (\R^d))$ be a bounded divergence-free vector field for some $1<p<\infty$, and let $b_\e$ be a (suitable) smooth approximation of $b$ (for example the standard regularization by a smooth mollifier or more in general as in Definition \ref{def:approx}). Let $X$ be the unique regular Lagrangian flow of $b$ and $\{X_i^{\theta,\e}\}_i$ be the approximating flow constructed via the $\theta$-method as in \eqref{eq:thetamethod} with vector field $b_\e$. Then, there exists $\e=\e(h)$ such that on bounded domains one has
\begin{equation}\label{e:introflow}
\max_{0\leq i\leq N}\|X(t_i,\cdot)-X_i^{\theta,\e(h)}\|_{L^1}\lesssim \frac{1}{|\log h|},
\end{equation}
where $T/N=h$ and the implicit constant depends on $\|b\|_{L^1W^{1,p}}$.
\end{pschema}
\begin{rem}
In the case of a Lipschitz continuous vector field, one obtains a first-order convergence, as shown in \eqref{eq:apostlip}, for the 
$L^\infty$ norm of the error.
In contrast, the error estimate in \eqref{e:introflow} yields a logarithmic rate of convergence in the case of a Sobolev vector field, and it refers to an averaged quantity, specifically the $L^1$ distance between the exact flow and its approximation.
\end{rem}

\medskip

The proof of the error estimate~\eqref{e:introflow} relies on an alternative approach to the theory by DiPerna-Lions~\cite{DPL}, developed in~\cite{CDL} by the third author in collaboration with C. De Lellis. While the DiPerna-Lions theory is based on the concept of renormalized solutions and on abstract compactness estimates, the theory in~\cite{CDL} is based on quantitative estimates on the regular Lagrangian flow, and as such, it provides effective rates of convergence. The logarithmic rate in~\eqref{e:introflow} corresponds to the logarithmic regularity established in~\cite{CDL}, which has been shown to be optimal in an analytical sense in~\cite{ACM} (see also~\cite{Seis2017} for a related approached based on optimal transportation theory and~\cite{BCC} for effective estimates in the zero-noise approximation of the regular Lagrangian flow). 

The rigorous statement of the error estimate~\eqref{e:introflow} will be given in Theorem~\ref{t:main1}  and after its statement we will make a few more technical comments on its assumptions and its significance. The proof of Theorem~\ref{t:main1} will be given in Section \ref{sect:teoA}. We will also complement this theoretical analysis by providing:
\begin{itemize}
\item some numerical simulations that will confirm the optimality of the rate~\eqref{e:introflow} for selected test problems, when the norm of the error is computed in a neighborhood of the singularity of the given vector field (Section~\ref{sect:simulations});
\item a numerical method to regularize the vector field (w.r.t. the space variable), which preserves the divergence-free condition and which is different from the standard regularization with smooth mollifiers (see Appendix~\ref{sec:discr-space}). This method is inspired by numerical schemes developed for the analysis of the two-dimensional Euler equations and it is constructed to ensure that the approximating field fulfills all the assumptions required by Theorem~\ref{t:main1}.
\end{itemize}

\subsection{The transport equation and error estimates for Lagrangian solutions} The classical theory of characteristics provides a link in the smooth setting between~\eqref{eq:ode} and the Cauchy problem for the linear transport equation, that is, the first-order partial differential equation
\begin{equation}\label{eq:te}
\tag{TE}
\begin{cases}
\partial_t u+b\cdot\nabla u=0,\\
u_{|_{t=0}}=u_0 \,.
\end{cases}
\end{equation}
More precisely, in the smooth setting~\eqref{eq:te} possesses a unique solution for every initial datum $u_0 : \R^d \to \R$, and such a solution has the form
\begin{equation}\label{def:sol-te}
u (t,x)\coloneqq u_0(X^{-1}(t,\cdot)(x)) \,.
\end{equation}
The solution is constant along trajectories of the velocity field, that is, it is a Lagrangian solution. 

Under Sobolev regularity assumptions on the velocity field, the DiPerna-Lions theory~\cite{DPL} guarantees that the connection between~\eqref{eq:ode} and~\eqref{eq:te} remains valid, provided we consider in~\eqref{def:sol-te} the (inverse of the) regular Lagrangian flow (as in Definition~\ref{def:rlf}). We note, however, that the Lagrangian property~\eqref{def:sol-te} for general weak solutions of~\eqref{eq:te} holds only under sufficiently large integrability assumptions on the weak solution. Specifically, if the velocity field $b \in L^1((0,T); W^{1,p}(\mathbb{R}^d))$, then~\eqref{def:sol-te} holds for solutions $u \in L^\infty ((0,T);L^q(\mathbb{R}^d))$ under the condition $1/p + 1/q \leq 1$. If this condition is not met, it is possible that non-unique solutions exist, and these solutions may not enjoy the Lagrangian property~\eqref{def:sol-te}. This issue will be discussed in more detail in Section \ref{sect:main}, along with some recent progress on this topic.

In Section \ref{sec:te}, we will prove the following a priori error estimate for the convergence towards the unique Lagrangian solution of~\eqref{eq:te} of a numerical approximation constructed with the generalized $\theta$-method.\\
\\
\begin{pschema}{Error estimate for the PDE}
Let $b \in L^1 ((0,T) ; W^{1,p} (\R^d))$ a bounded divergence-free vector field for some $1<p<\infty$ and $X$ the unique regular Lagrangian flow of $b$. Let $b_\e$ be a (suitable) smooth approximation of $b$, in the sense of Definition \ref{def:approx}, and let $u_0\in L^1(\R^d)$ be given. Then, there exist $\e=\e(h)$ and a numerical solution $u^h\in L^\infty((0,T);L^1(\R^d))$ (which depends on the approximating backward flow constructed via the $\theta$-method, see Definition \ref{def:sol-teN}) such that, on bounded domains 
\begin{equation}\label{e:introtransport}
\max_{0\leq i\leq N}\|u_0(X^{-1}(t_i,\cdot))-u^h(t_i,\cdot))\|_{L^1}\lesssim \frac{1}{|\log h|},
\end{equation}
where $T/N=h$ and the implicit constant depends on $\|b\|_{L^1W^{1,p}}$ and on the modulus of continuity in $L^1$ of $u_0$.
\end{pschema}
\begin{rem}
The error estimate for the flow in \eqref{eq:apostlip} implies that for Lipschitz vector field and Lipschitz initial datum, one obtains a first-order convergence for the $L^\infty$ norm of the error, namely
$$
\sup_{0\leq i\leq N}\|u(t_i,\cdot)-u^h(t_i,\cdot)\|_{L^\infty}\lesssim h,
$$
where the implicit constant depends on $\|b\|_{W^{1,\infty}}$ and $\mathrm{Lip}(u_0)$.
In contrast, the error estimate in \eqref{e:introtransport} yields a logarithmic rate of convergence in the case of a Sobolev vector field, and it refers to an averaged quantity, specifically the $L^1$ distance between the exact solution and its approximation.
\end{rem}

\subsection{Comparison with other numerical literature} 
The first works on the numerical approximation of the solution of the continuity equation with Sobolev velocity fields seem to be \cite{Walkington2005,Boyer2012}, where it has been shown the convergence of a discontinuous Galerkin method and a finite volume method respectively to the solution of \eqref{eq:te}, without any explicit rate of convergence. Later, a finite volume upwind method has been used in \cite{SeisSchlichting2017} and \cite{SeisSchlichting2018} to analyze the convergence of numerical schemes (an explicit and an implicit one, respectively) working directly on \eqref{eq:te}. These works provide explicit error estimates on a logarithmic Kantorovich-Rubinstein distance, obtaining a rate of order at least $1/2$ in the mesh sizes. Such results are based on a priori stability estimates proved in \cite{Seis2017}, which in turn are inspired by the a priori Lagrangian estimates in \cite{CDL}, and have recently been adapted to include diffusion in \cite{Navarro2022,Navarro2023}. Other works in this direction are \cite{BelgacemJabin2019,JabinZhou2024}, where the authors deal with nonlinear continuity equations or with non cartesian tessellation of the domain, a feature which for technical reasons was not previously achieved in \cite{SeisSchlichting2017}. 
Finally, we point out the work of the second author \cite{Cort}, where approximate solutions of \eqref{eq:ode} are constructed via an explicit Euler method 
and correspondingly approximate solutions of \eqref{eq:te} that converge, w.r.t. a specific Wasserstein distance, to the unique distributional solution in the DiPerna-Lions setting with quantitative rates.
Moreover, the same logarithmic rate of convergence has been shown in \cite{Lucio}, for the Picard scheme applied to \eqref{eq:ode} with a divergence-free velocity field $b\in L^1((0,T);W^{1,p}(\R^d))$ with $p>d$. Finally, the convergence of finite difference schemes for stochastic transport equations has been investigated recently in \cite{FKP}.

\subsection{Notation}
We fix here the notation we use in the paper. We denote by $L^p(\R^d)$ the standard Lebesgue spaces and with $\|\cdot\|_{L^p}$ their norm. 
The Sobolev space of $L^p$ functions with distributional derivatives of first order in $L^p$ is denoted by $W^{1,p}(\R^d)$. 
The spaces $L^p_{\mathrm{loc}}(\R^d)$ and $W^{1,p}_{\mathrm{loc}}(\R^d)$ denote the space of functions which are locally in $L^p(\R^d)$ and $W^{1,p}(\R^d)$ respectively. 
Given a Banach space $\mathcal{X}$, we denote by $L^p((0,T);\mathcal{X})$ the Bochner space of all measurable functions $u:[0,T]\to \mathcal{X}$ endowed with its natural norm.
The space of continuous functions on $[0,T]$ with values in $\mathcal{X}$ is denoted by $C([0,T];\mathcal{X})$, endowed with the supremum norm $\|\cdot\|_{L^\infty((0,T);\mathcal{X})}$. We denote by $B_r$ the ball of radius $r>0$ centered in the origin of $\R^d$ and with $\mathcal{L}^{d}$ the Lebesgue measure on $\R^d$. We also denote with $I$ the $d\times d$ identity matrix in $\R^{2d}$.
In the estimates we denote with $C$ a positive constant which may change from line to line. 
Finally, given two positive quantities $A,B$, we sometimes use the notation $A \lesssim B$ whenever $A \leq CB$ with $C>0$ a uniform constant.

\section{Analytical tools, the numerical scheme, and main results}\label{sect:main}
The main goal of this paper is to provide a rigorous mathematical framework and to establish the a priori error estimate in~\eqref{e:introflow}, thus extending to the more general case of velocity fields with Sobolev regularity the classical error estimate~\eqref{eq:apostlip} which holds for Lipschitz velocity fields.
In this section, we present the precise mathematical setting for our analysis, review the relevant literature, state our main results, and provide some further comments. 

The starting point is that, in the nonsmooth setting, the classical notion of flow must be replaced by the concept of regular Lagrangian flow: 
\begin{defn}[Regular Lagrangian flow~\cite{DPL,Am}]\label{def:rlf}
Let $b\in L^1((0,T);L^1_{\mathrm{loc}}(\R^d))$ be a given velocity field. A function $X:(0,T)\times\R^d \to \R^{d}$ is a regular Lagrangian flow of $b$ if
\begin{itemize}
\item[$i)$]
for every  $\phi \in C_c ([0,T) \times \mathbb{R}^d)$ it holds that 
\begin{equation}\label{eq:form_integrale}
 \int_0 ^T \int_{\mathbb{R}^d} [X(t,x) \partial_t \phi (t,x) + b (t, X(t,x)) \phi(t,x)] \de t \de x + \int_{\mathbb{R}^d} x \phi(0,x) \de x =0.
 \end{equation}
\item[$ii)$] There exists a constant $L>0$ independent on $t$ such that 
\begin{equation}\label{eq:compress}
\mathcal{L}^{d}(X(t,\cdot)^{-1}(A))\leq L\mathcal{L}^{d}(A),
\qquad \text{for any open set $A\subset\R^{d}$.}
\end{equation}
\end{itemize}
\end{defn}

\begin{rem}[The role of condition~\eqref{eq:compress}]\label{rem_compressibility}
Condition~\eqref{eq:compress} in Definition~\ref{def:rlf} ensures that sets of positive Lebesgue measure remain of positive Lebesgue measure under the action of the flow, in a quantitative way. Notice that, for smooth divergence-free velocity fields, condition~\eqref{eq:compress}  is an equality and~$L=1$. Combining~\eqref{eq:form_integrale} and~\eqref{eq:compress}, it is possible to see that integral lines $t \mapsto X(t,x_0)$ of the velocity field~$b$ are well defined for~a.e.~$x_0$. More precisely, for a.e.~$x_0 \in \R^d$ it holds 
$$
X(t, x_0)- X(s,x_0) = \int_{s} ^{t} b (\tau, X(\tau, x_0)) \, \de \tau \quad \text{ for every $t,s >0$.}
$$
Therefore, condition~$i)$ in Definition~\ref{def:rlf} can be replaced by the requirement that $t~\mapsto~X(t, x_0)$ is an absolutely continuous solution of~\eqref{eq:ode} for a.e.~$x_0 \in \R^d$ (as in~\cite[Definition~6.1]{Am}).
\end{rem}

\begin{rem}[Condition~\eqref{eq:compress} as a selection condition]\label{rem_selection_RLF}
Condition $ii)$ in Definition~\ref{def:rlf} should be interpreted as a selection criterion. Indeed, the theory developed in~\cite{DPL,Am} establishes the existence of a unique flow for which both conditions $i),~ii)$ hold. We notice the following:
\begin{itemize}
\item If $p>d$ (in which case, in particular, by Sobolev embeddings, the velocity field admits a H\"older continuous representative, and is therefore well-defined pointwise everywhere), then by~\cite{CC} there exists a unique flow satisfying condition i), and condition ii) automatically holds as a consequence.
\item If $p \leq d$, however, there exist examples~\cite{BCDL, GS, Kumar, SP} of divergence-free velocity fields for which multiple flows satisfying only condition i) have been constructed (for a suitable, fixed pointwise representative of the velocity field). Among these flows, exactly one satisfies condition ii). In other words, while multiple a.e. flows may exist, there is a unique incompressible a.e. flow, that is, a unique regular Lagrangian flow. 
\end{itemize}
\end{rem}
The DiPerna-Lions theory~\cite{DPL} guarantees the existence of a unique regular Lagrangian flow in the case of a bounded, divergence-free velocity field $b \in L^1 ((0,T) ; W^{1,p} (\R^d)) $ for a given $1<p < + \infty$ (the boundedness of the velocity field and the condition on its divergence can be somehow relaxed, we refer to~\cite{DPL} for the details). This result has been extended by Ambrosio~\cite{Am} to bounded, divergence-free velocity field with bounded variation $b \in L^1 ((0,T) ; BV (\R^d))$. Given such a notion of regular Lagrangian flow, the first difficulty is how to suitably interpret the $\theta$-method iteration in~\eqref{eq:thetamethod}, since the velocity field is in general not defined pointwise everywhere (in the time variable, and in the space variable if $p \leq d$). Therefore, for the setup and the solvability of the $\theta$-method iteration, we need to regularize the velocity field, and in the error estimates the regularization parameter $\e$ will be coupled to the time step $h$. 
\begin{defn}[\textbf{Approximating sequence}]\label{def:approx}
Let $b\in L^1((0,T);W^{1,p}(\R^d)) $ for a given $1<p<\infty$ be a bounded, divergence-free velocity field. 
Given $\e>0$, we  say that a sequence of velocity fields~$\{b_{\e}\}_{\e}$  
is an {\em approximating sequence for $b$} if it satisfies
\begin{align}
&\|b_{\e}-b\|_{L^1_t L^1_x}\leq \bar{C}_0 \e^\alpha,\label{eq:r1}\\
&\|\nabla b_{\e}\|_{L^\infty_tL^\infty_x}\leq \frac{\bar C_1}{\e^\beta},\label{eq:r2}\\
&\|\dive b_{\e}\|_{L^\infty_tL^\infty_x}\leq \bar C_2,\label{eq:r3}\\
&\|b_\e\|_{L^\infty_tL^\infty_x}\leq \bar C_3,\label{eq:r4}
\end{align}
for some $\bar{C}_0, \bar C_1,\bar C_2, \bar C_3, \alpha,\beta>0$.
\end{defn}

\begin{rem} 
Our results can be generalized to the case in which~\eqref{eq:r1} and~\eqref{eq:r2} are replaced by
\begin{equation*}
\|b_\e-b\|_{L^1_tL^1_x}\leq \phi(\e)
\qquad \text{ and } \qquad
\|\nabla b_\e\|_{L^{\infty}_tL^\infty_x}\leq \frac{1}{\psi(\e)},
\end{equation*}
for  positive continuous functions $\phi,\psi:[0,1]\to\R^+$ with $\psi(0)=0$ and $\phi(0)=0$. In this case, the error estimate will depend on the function $\phi$ while the relation between $\e$ and $h$ will depend on the function $\psi$. 
\end{rem}

\begin{rem}
From a numerical point of view, the assumption on the divergence of $b_\e$ in \eqref{eq:r3} is quite restrictive if one wants to consider finite elements methods. Indeed, it is a well-known and important problem in numerical analysis to find finite elements bases with good control on the divergence, which is related to the non-local nature of the divergence-free constraint. Specifically, while the divergence of the (finite element) approximate field can be small in $L^2$, it may be not controlled in $L^\infty$. It is worth to note that the standard mollifying procedure by convolution satisfies \eqref{eq:r1}-\eqref{eq:r4}. In addition, in Appendix \ref{sec:discr-space} we provide another space discretization compatible with the assumptions in the main theorems based on the {\em vortex-blob method} for $2D$ Euler equations, see \cite{CCS3, DPM}.
\end{rem}
With the above definition, we define a regularized version of the $\theta$-method.\\
\\
\begin{pschema}{Regularized $\theta$-method}
For $x \in \R^d$, set~$X_{0}^{\theta,\e}(x)=x$, and for $i=0,\ldots,N-1$, given $X_{i}^{\theta,\e}= X_{i}^{\theta,\e}(x)$, compute $X_{i+1}^{\theta,\e}=X_{i+1}^{\theta,\e}(x)$ (as previously, we omit explicitly writing the dependence on $x$ where not necessary) by solving
\begin{equation}\label{eq:theta4}
\frac{X_{i+1}^{\theta,\e}-X_i^{\theta,\e}}{h}=(1-\theta)\fint_{t_i}^{t_{i+1}}b_{\e}(s,X_i^{\theta,\e})\de s+ \theta \fint_{t_i}^{t_{i+1}}b_{\e}(s,X_{i+1}^{\theta,\e})\de s.
\end{equation}
\end{pschema}
\\
We can now define $\tilde{X}^{h,\theta, \e}$ 
as
\begin{equation}\label{eq:theta-interpolato}
\tilde{X}^{h,\theta,\e}(t)\coloneqq \sum_{i=0}^{N-1}\left(X_{i}^{\theta,\e}+\frac{t-t_i}{h}(X_{i+1}^{\theta,\e}-X_i^{\theta,\e}) \right)\chi_{[t_i,t_{i+1})}(t).
\end{equation}


The rigorous statement of our a priori error estimate for the flow (recall~\eqref{e:introflow}) is the following: 
\begin{mainthm}[Error estimates for the flow]\label{t:main1} 
Let $b\in L^1((0,T);W^{1,p}(\R^d))\cap L^\infty((0,T)\times\R^d)$ be a divergence-free velocity field for some $1<p<\infty$. For $\e>0$, let $b_{\e}$ be an approximating sequence of the velocity field $b$ as in Definition~\ref{def:approx} and let $\e=h^\frac{1}{2\beta}$. Then, for any given $r>0$ there exists a positive constant $C=C(d,p,r,T,\|b\|_{L^\infty},\|\nabla b\|_{L^1_tL^p_x},\bar C_1,\bar C_2)$ such that, given $\tilde{X}^{h, \theta, \e}$ defined as in \eqref{eq:theta-interpolato}:
\begin{equation}\label{eq:rate2}
\int_{B_r}|\tilde{X}^{h,\theta,\e}(t,x)-X(t,x)|\de x\leq \frac{C}{|\log h|}.
\end{equation}
\end{mainthm}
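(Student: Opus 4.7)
The strategy is to compare $\tilde X^{h,\theta,\e}$ directly with the regular Lagrangian flow $X$ through the Crippa--De Lellis logarithmic functional
$$
\Phi_\delta(t) \coloneqq \int_{B_r} \log\!\left(1 + \frac{|\tilde X^{h,\theta,\e}(t,x) - X(t,x)|}{\delta}\right)\de x,
$$
where $\delta>0$ is an auxiliary parameter to be tuned at the end. The aim is to bound $\Phi_\delta(T)$ uniformly in a suitable balance regime between $\delta,\e,h$, and then transfer this logarithmic control to an $L^1$ estimate by a Chebyshev-type argument. Differentiating in $t$, and using that $\tilde X^{h,\theta,\e}$ is piecewise linear while $X(\cdot,x_0)$ is absolutely continuous for a.e.\ $x_0$ (as recalled in Remark~\ref{rem_compressibility}), one obtains
$$
\Phi_\delta'(t) \leq \int_{B_r} \frac{|\dot{\tilde X}^{h,\theta,\e}(t,x) - b(t,X(t,x))|}{\delta + |\tilde X^{h,\theta,\e}(t,x) - X(t,x)|}\,\de x,
$$
and the numerator is split by inserting $b_\e(t,\tilde X^{h,\theta,\e})$ and $b_\e(t,X)$ as intermediate terms, producing three contributions.

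The first (core) piece $b_\e(t,\tilde X^{h,\theta,\e}) - b_\e(t,X)$ is handled by the Lusin--Lipschitz inequality $|b_\e(Y)-b_\e(Z)| \leq C|Y-Z|\bigl(M|\nabla b_\e|(Y) + M|\nabla b_\e|(Z)\bigr)$, where $M$ is the Hardy--Littlewood maximal function: the factor $|Y-Z|$ cancels against the denominator, and what remains integrates in $x$ and $t$ to a constant depending only on $\|\nabla b\|_{L^1 L^p}$, after using the $L^p$-boundedness of $M$, the compressibility~\eqref{eq:compress} of $X$, and an analogous compressibility bound for $\tilde X^{h,\theta,\e}$ coming from $\|\dive b_\e\|_\infty \leq \bar C_2$. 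The second (approximation) piece $b_\e(t,X)-b(t,X)$ yields, via compressibility of $X$ and~\eqref{eq:r1}, a contribution bounded by $\bar C_0\e^\alpha/\delta$. The third (discretization) piece $\dot{\tilde X}^{h,\theta,\e}(t) - b_\e(t,\tilde X(t))$ exploits that on each interval $[t_i,t_{i+1}]$ the interpolant stays within distance $h\bar C_3$ of both $X_i^{\theta,\e}$ and $X_{i+1}^{\theta,\e}$, so that combining~\eqref{eq:theta4}, the time average $\fint_{t_i}^{t_{i+1}} b_\e(s,\cdot)\de s$, and one further application of Lusin--Lipschitz yields a contribution of order $h/\delta$. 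Summing gives $\Phi_\delta(T) \leq C_1 + C_2(\e^\alpha + h)/\delta$; choosing $\delta \coloneqq \max(h^{\alpha/(2\beta)},h)$ under the prescribed scaling $\e=h^{1/(2\beta)}$ makes the right-hand side uniformly bounded, while $|\log\delta| \geq c|\log h|$. A Chebyshev inequality then gives $\mathcal{L}^d(\{x\in B_r : |\tilde X^{h,\theta,\e}-X|>\lambda\}) \leq C/\log(1+\lambda/\delta)$, which combined with the uniform a priori bound $|\tilde X^{h,\theta,\e}-X| \leq T(\|b\|_\infty + \bar C_3)$ and a layer-cake decomposition with $\lambda \sim 1/|\log h|$ produces the claimed rate~\eqref{eq:rate2}.

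\textbf{Main obstacle.} The most delicate step is the bookkeeping of the discretization piece: the simultaneous presence in~\eqref{eq:theta4} of the explicit evaluation at $X_i^{\theta,\e}$, the implicit evaluation at $X_{i+1}^{\theta,\e}$, and the time average over $[t_i,t_{i+1}]$ forces repeated add-and-subtract reductions so as to express every error term as a Lusin--Lipschitz-type increment of $b_\e$ between spatially close points. A preliminary issue is the solvability of the implicit iteration, which by a Banach contraction argument requires $h\|\nabla b_\e\|_\infty = O(h/\e^\beta) < 1$; with $\e=h^{1/(2\beta)}$ this becomes $h^{1/2} \ll 1$, which is the structural reason for this coupling of $\e$ with $h$. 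A secondary technical point is the quantitative compressibility of the piecewise linear interpolant $\tilde X^{h,\theta,\e}$, needed to transfer the maximal-function integrals from $B_r$ to the image set via change of variables; this follows from $\|\dive b_\e\|_\infty \leq \bar C_2$, the corresponding identities for $\det\nabla X_i^{\theta,\e}$ under the $\theta$-iteration, and again the smallness $h/\e^\beta = h^{1/2}$.
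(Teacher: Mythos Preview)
Your overall architecture matches the paper's: control the Crippa--De Lellis functional $\Phi_\delta$, then pass to an $L^1$ bound via Chebyshev and a splitting of $B_r$ into sub- and super-level sets. The final optimization step (your choice $\lambda\sim 1/|\log h|$ versus the paper's $\eta=\sqrt\delta$) is immaterial. However, the decomposition you propose after \emph{differentiating} $\Phi_\delta$ has a genuine gap in the third (discretization) piece. You claim that $|\dot{\tilde X}^{h,\theta,\e}(t)-b_\e(t,\tilde X(t))|$ contributes $O(h/\delta)$, but this fails under Definition~\ref{def:approx}: on $(t_i,t_{i+1})$ one has $\dot{\tilde X}^{h,\theta,\e}=\fint_{t_i}^{t_{i+1}}[(1-\theta)b_\e(s,X_i^{\theta,\e})+\theta b_\e(s,X_{i+1}^{\theta,\e})]\de s$, a \emph{time average}, whereas $b_\e(t,\cdot)$ is a pointwise value. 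Lusin--Lipschitz handles the spatial shift from $X_i^{\theta,\e}$ or $X_{i+1}^{\theta,\e}$ to $\tilde X(t)$ (distance $\le h\bar C_3$), but you are still left with $\bigl|\fint_{t_i}^{t_{i+1}} b_\e(s,y)\de s-b_\e(t,y)\bigr|$, for which nothing better than $2\bar C_3$ is available, since no time regularity on $b_\e$ is assumed. Hence $\int_0^T\int_{B_r}|\dot{\tilde X}-b_\e(t,\tilde X)|/\delta\,\de x\de t$ is of order $T\mathcal L^d(B_r)/\delta$, not $h/\delta$, and the bound $\Phi_\delta(T)\le C_1+C_2(\e^\alpha+h)/\delta$ is not obtained.

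The paper avoids this by \emph{not} differentiating. It uses the concavity of $\log(1+\cdot)$ to bound the increment of $\Phi_\delta$ across each $[t_j,t_{j+1}]$ and then telescopes. The point is that the absolute value then lands \emph{outside} the time integral: the increment's numerator is
\[
\Bigl|X_{j+1}^{\theta,\e}-X_j^{\theta,\e}-(X(t_{j+1})-X(t_j))\Bigr|
=\Bigl|\int_{t_j}^{t_{j+1}}\bigl[(1-\theta)b_\e(s,X_j^{\theta,\e})+\theta b_\e(s,X_{j+1}^{\theta,\e})-b(s,X(s))\bigr]\de s\Bigr|,
\]
and only afterwards is the absolute value pushed inside. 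The resulting integrand has all time evaluations at the \emph{same} variable $s$, so the average-versus-pointwise mismatch never appears. Two further advantages of this route: the denominator is frozen at the nodal value $\delta+|X_j^{\theta,\e}-X(t_j)|$, so only compressibility of the maps $X_j^{\theta,\e}$ (Lemma~\ref{lem:jac_num_irr}) is needed, not of the full interpolant; and the core Lusin--Lipschitz step is applied to $b$ rather than $b_\e$, producing $M|\nabla b|$ and hence $\|\nabla b\|_{L^1_tL^p_x}$. Your version would produce $M|\nabla b_\e|$, but Definition~\ref{def:approx} gives no uniform bound on $\|\nabla b_\e\|_{L^1_tL^p_x}$, so that step would also be incomplete as stated.
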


\begin{rem}
We remark that the divergence-free assumption on the velocity field $b$ in Theorem~\ref{t:main1} can be relaxed to $(\dive b)^-\in L^1((0,T);L^{\infty}(\R^d))$ if $p>d$, and to $(\dive b)^{-}\in L^{\infty}((0,T)\times\R^d)$ if $1\leq p\leq d$, as done in \cite{Cort}. Moreover, Theorem~\ref{t:main1} holds true if $b\in L^1((0,T);W^{1,p}_{loc}(\R^d))$ and it satisfies suitable growth conditions (see \cite{DPL,CDL}).
\end{rem}

\begin{rem}[The case $p=1$]
If $p=1$, while the convergence of the scheme can be proved in a similar way as in Theorem~\ref{t:main1}, it is not clear how to obtain an explicit convergence rate. The reason is the lack of a strong $L^1$ estimate for the maximal function, an operator from harmonic analysis that plays a crucial role in the proof of Theorem~\ref{t:main1}. See however~\cite{BC} for estimates that depend on the equi-integrability in $L^1$ of the derivative of the velocity field, and the recent preprint~\cite{HS} which establishes a quantitative version of~\cite{Am} in terms of concatenated exponential rates.
\end{rem}

A second goal of this work is to apply the error estimates for the $\theta$-method to the Cauchy problem for the linear transport equation \eqref{eq:te}. By assuming that the initial datum $u_0 \in L^q(\R^d)$ for some $q \geq 1$, and the divergence-free velocity field $b \in L^1((0,T); W^{1,p}(\R^d))$ is bounded, with $1 < p < \infty$, in their seminal work, DiPerna and Lions \cite{DPL} showed that \eqref{eq:te} admits a unique distributional solution in $L^\infty((0,T);L^q(\R^d))$ whenever $1/p + 1/q \leq 1$. Thus, uniqueness also implies that the solution satisfies the representation formula \eqref{def:sol-te}. Notably, within the DiPerna-Lions framework, one can always define a unique Lagrangian solution, even without uniqueness of distributional solutions. Indeed, Modena, Sattig, and Székelyhidi \cite{MS3, MS} proved that if $1/p + 1/q \geq 1 + 1/d$, uniqueness fails for distributional solutions, thus breaking the link between \eqref{eq:ode} and \eqref{eq:te} in low integrability regimes. This non-uniqueness was recently extended in \cite{BCK} to the sharper range $1/p + 1/q \geq 1 + 1/(dq)$, which is optimal for {\em positive solutions} \cite{BCDL}. For sign-changing solutions, \cite{CCK} further improved this threshold to $1/p + 1/q > 1 + 1/d - \delta$, for some explicit $\delta = \delta(d,q) > 0$, leaving a gap between regimes. See also \cite{CL21, CL22} for non-uniqueness results in $L^r_t C^k_x$ spaces with $r < \infty$. 

The Lagrangian solution emerges as a natural candidate for a ``physically meaningful'' solution in this irregular setting, due to its uniqueness, stability, and renormalization properties. When only fractional regularity of the velocity field is assumed, several natural approximations (e.g., mollification, vanishing viscosity, or smooth approximations \cite{CCS2, DLG, CCSo}) can fail to select a unique distributional solution.

In our second theoretical result, we establish an a priori error estimate for the convergence of a numerical approximation towards the Lagrangian solution of the transport equation~\eqref{eq:te}. This numerical approximation is constructed by considering a backward Lagrangian flow, which can again be computed using the $\theta$-method. We emphasize that the resulting numerical solution $u^h$ does not, in general, solve the transport equation~\eqref{eq:te} exactly. Nevertheless, we show that $u^h$ converges to the unique Lagrangian solution $u^L$ of~\eqref{eq:te}.
\begin{mainthm}[Error estimates for the Lagrangian solution to the transport equation]\label{t:main2}
Let $b\in L^1((0,T);W^{1,p}(\R^d))\cap L^\infty((0,T)\times\R^d)$ be a divergence-free velocity field for some $1<p<\infty$ and let $Y$ be the backward regular Lagrangian flow solving \eqref{eq:back}. Define the sequence $\{Y_i^{\theta,\e}\}_{i,\e}$ as in \eqref{eq:theta back} and $\tilde Y^{h,\theta,\e}$ as in \eqref{eq:theta back 2}, corresponding to an approximating sequence $\bar b_t^\e$ of the {\em backward vector field} $\bar b_t$, as in \eqref{def:bar b}. Then, there exists a constant $c=C(d,p,T,\|b\|_{L^\infty_{t,x}},\|\nabla b\|_{L^1_tL^p_x},\bar C_1,\bar C_2)$ such that for any $u_0\in L^1(\R^d)$ the sequence $u^h$ defined in \eqref{def:sol-teN}, constructed via the backward flow $\tilde Y^{h,\theta,\e}$, converges to the unique Lagrangian solution $u^L$ of \eqref{eq:te}, defined in \eqref{def:sol-te}, with the estimate
\begin{equation}
\sup_{t\in[0,T]}\|u^{h}(t,\cdot)-u^L(t,\cdot)\|_{L^1}\leq C\left(\|u_0^\delta-u_0\|_{L^1}+\frac{C_\delta}{|\log h|}\right),
\end{equation}
where $u_0^\delta$ is any smooth approximation of $u_0$ with compact support and the constant $C_\delta$ blows up as $\delta\to 0$. 
\end{mainthm}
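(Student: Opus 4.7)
The strategy is a triangle-inequality decomposition that reduces the bound to Theorem~\ref{t:main1} applied to the backward Cauchy problem, after replacing the $L^1$ initial datum by a smooth compactly supported approximation whose Lipschitz constant carries the $\delta$-dependence.

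\textbf{Step 1: smooth approximation of the datum.} Given $\delta>0$, pick $u_0^\delta\in C_c^\infty(\R^d)$ with $\mathrm{supp}\,u_0^\delta\subset B_{R_\delta}$, $\|u_0^\delta-u_0\|_{L^1}\to 0$ as $\delta\to 0$, and $\mathrm{Lip}(u_0^\delta)\leq C_\delta$. Set
\[
u^{L,\delta}(t,x):=u_0^\delta(Y(t,x)),\qquad u^{h,\delta}(t,x):=u_0^\delta\bigl(\tilde Y^{h,\theta,\e}(t,x)\bigr),
\]
and decompose
\[
u^h-u^L=\bigl(u^h-u^{h,\delta}\bigr)+\bigl(u^{h,\delta}-u^{L,\delta}\bigr)+\bigl(u^{L,\delta}-u^L\bigr).
\]

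\textbf{Step 2: datum-approximation terms.} Both flows are quasi-incompressible: $Y(t,\cdot)$ satisfies \eqref{eq:compress} with $L=1$ as $b$ is divergence-free, and each backward $\theta$-step $x\mapsto Y_{i+1}^{\theta,\e}(x)$ is a $C^1$ diffeomorphism (by the implicit function theorem, for $h$ small enough that $h\theta\|\nabla \bar b_\e\|_{L^\infty}<1$) whose Jacobian is bounded by $1+Ch$ thanks to \eqref{eq:r3}. Iterating $N=T/h$ steps produces a uniform compressibility constant $\bar L=\bar L(T,\bar C_2)$ for $Y_i^{\theta,\e}$, which transfers to the interpolant $\tilde Y^{h,\theta,\e}(t,\cdot)$. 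Changing variables under these quasi-incompressible maps,
\[
\bigl\|u^h(t,\cdot)-u^{h,\delta}(t,\cdot)\bigr\|_{L^1}+\bigl\|u^{L,\delta}(t,\cdot)-u^L(t,\cdot)\bigr\|_{L^1}\leq (1+\bar L)\,\|u_0^\delta-u_0\|_{L^1},
\]
uniformly in $t\in[0,T]$, $h$, and $\e$.

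\textbf{Step 3: flow error at Lipschitz datum and application of Theorem~\ref{t:main1}.} Since $u_0^\delta$ is Lipschitz, pointwise
\[
\bigl|u^{h,\delta}(t,x)-u^{L,\delta}(t,x)\bigr|\leq C_\delta\,\bigl|\tilde Y^{h,\theta,\e}(t,x)-Y(t,x)\bigr|.
\]
Because $\mathrm{supp}\,u_0^\delta\subset B_{R_\delta}$, the integrand vanishes unless $Y(t,x)\in B_{R_\delta}$ or $\tilde Y^{h,\theta,\e}(t,x)\in B_{R_\delta}$; using the $L^\infty$ bounds on $b$ and $b_\e$, such $x$ lie in a fixed ball $B_r$ with $r=R_\delta+T\max\{\|b\|_{L^\infty},\bar C_3\}$. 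The backward vector field $\bar b$ and its regularization $\bar b_\e$ satisfy all the hypotheses of Theorem~\ref{t:main1}, so applying that theorem on $B_r$ gives
\[
\bigl\|u^{h,\delta}(t,\cdot)-u^{L,\delta}(t,\cdot)\bigr\|_{L^1}\leq C_\delta\int_{B_r}\bigl|\tilde Y^{h,\theta,\e}(t,x)-Y(t,x)\bigr|\,\de x\leq \frac{C_\delta}{|\log h|}.
\]

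Combining Steps 2 and 3 and taking the supremum in $t\in[0,T]$ yields the claimed estimate. The main technical subtlety is the quasi-incompressibility of the interpolated numerical backward flow used in Step 2, which requires a careful discrete Liouville-type argument based on the $L^\infty$ bound \eqref{eq:r3} on $\dive b_\e$; once this is in place, the localization in Step 3 is routine and the application of Theorem~\ref{t:main1} is direct, since the class of velocity fields considered here is closed under time reversal.
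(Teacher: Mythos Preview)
Your proposal is correct and follows essentially the same approach as the paper: the same three-term triangle decomposition, the same use of quasi-incompressibility of both flows (the paper's Lemma~\ref{lem:jac_num_irr}) to control the datum-approximation terms, and the same appeal to the flow error estimate for the middle term. The only cosmetic difference is that you invoke Theorem~\ref{t:main1} directly for the backward problem (which the paper packages as Corollary~\ref{cor:main}), whereas the paper's Step~1 re-derives the Lipschitz case from the superlevel-set estimate~\eqref{stima sopralivelli} before the density argument; your localization to $B_r$ in Step~3 is also slightly more explicit than the paper's.
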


\section{Rate of convergence towards regular Lagrangian flows}
\label{sect:teoA}
The goal of this section is to prove Theorem \ref{t:main1}.
We first analyze the solvability of the regularized $\theta$-method~\eqref{eq:theta4}. In the case of the explicit Euler scheme, i.e. $\theta=0$, the maps $\{X_i^{0, \e}(x)\}_{i=0}^{N}$ are uniquely determined (by construction). So we just have to consider the case $0<\theta\leq 1$. We prove that if $\e$ and $h$ are chosen appropriately, the system of equations \eqref{eq:theta4} admits a unique smooth solution $\{X_i^{\theta,\e}(x)\}_{i=0}^{N}$, which in turn uniquely determines the map $\tilde{X}^{h, \theta, \e}$ defined in \eqref{eq:theta-interpolato}. 

\begin{prop}\label{prop:esistenza-p<d}
Let $b\in L^1((0,T);W^{1,p}(\R^d))\cap L^\infty((0,T)\times\R^d)$ be a divergence-free velocity field for some $1<p<\infty$ and let $T>0$, $N\in\N$ and $0\leq\theta\leq 1$ be given. Assume that $\{b_\e\}_\e$ is an approximating sequence as in Definition \ref{def:approx}.
Then, by considering $\e(h)=(2 \bar{C}_1h)^\frac{1}{\beta}$, with $\bar{C}_1$ and $\beta$ being the constants in \eqref{eq:r2}, there exists a sequence $\{X_i^{\theta,\e}(x)\}_{i=0}^{N}$ of continuous maps satisfying \eqref{eq:theta4}. 
\end{prop}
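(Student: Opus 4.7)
The plan is to solve the implicit step in \eqref{eq:theta4} by a fixed-point argument, applied iteratively in $i$, and then to verify that each resulting map $x \mapsto X_i^{\theta,\e}(x)$ is continuous by induction. The key observation is that the coupling $\e(h)=(2\bar C_1 h)^{1/\beta}$ is tailored precisely so that, after multiplying the implicit term by $h$, one obtains a strict contraction constant $\theta/2\leq 1/2 < 1$ independent of $h$.

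More concretely, fix $i \in \{0,\ldots,N-1\}$ and suppose $X_i^{\theta,\e}(x)$ has already been constructed. For fixed $x\in\R^d$, define the map $F_i:\R^d\to\R^d$ by
\begin{equation*}
F_i(y) \coloneqq X_i^{\theta,\e}(x) + h(1-\theta)\inti b_\e(s,X_i^{\theta,\e}(x))\de s + h\theta \inti b_\e(s,y)\de s.
\end{equation*}
A solution $X_{i+1}^{\theta,\e}(x)$ of \eqref{eq:theta4} is exactly a fixed point of $F_i$. Using \eqref{eq:r2} and the choice $\e^\beta = 2\bar C_1 h$, for any $y_1, y_2\in\R^d$ we estimate
\begin{equation*}
|F_i(y_1)-F_i(y_2)| \leq h\theta \|\nabla b_\e\|_{L^\infty_t L^\infty_x}|y_1-y_2| \leq h\theta \frac{\bar C_1}{\e^\beta}|y_1-y_2| = \frac{\theta}{2}|y_1-y_2|,
\end{equation*}
so $F_i$ is a contraction with constant at most $1/2$. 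The Banach fixed-point theorem then produces a unique $X_{i+1}^{\theta,\e}(x)\in\R^d$. Starting from $X_0^{\theta,\e}(x)=x$ and iterating for $i=0,\ldots,N-1$ yields the entire sequence.

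It remains to show that $x\mapsto X_i^{\theta,\e}(x)$ is continuous for every $i$, which follows by induction: $X_0^{\theta,\e}=\mathrm{id}$ is trivially continuous, and, assuming continuity at step $i$, we apply \eqref{eq:theta4} to two initial points $x, x'$, use \eqref{eq:r2} on both the explicit and the implicit increment, and rearrange to obtain
\begin{equation*}
\Bigl(1-\tfrac{\theta}{2}\Bigr)|X_{i+1}^{\theta,\e}(x)-X_{i+1}^{\theta,\e}(x')| \leq \Bigl(1+\tfrac{1-\theta}{2}\Bigr)|X_i^{\theta,\e}(x)-X_i^{\theta,\e}(x')|,
\end{equation*}
which in fact gives Lipschitz continuity (with a constant that can degenerate in $h$, but this is irrelevant for the existence statement).

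I do not expect any serious obstacle here: the proposition is essentially a well-posedness statement for the numerical scheme, and the balance between the upper bound \eqref{eq:r2} on $\nabla b_\e$ and the size of $h$ has been prescribed in the statement so as to make the Banach fixed-point argument work. The only mild subtlety is that \eqref{eq:r2} is stated for a generic $\e$, so one should verify that the integrated quantity $\inti b_\e(s,\cdot)\de s$ inherits the same spatial Lipschitz bound, which is immediate from Jensen's inequality (or Fubini). No further properties of the regularization (e.g. \eqref{eq:r1}, \eqref{eq:r3}, \eqref{eq:r4}) enter in this existence/continuity proof, which matches the scope of the proposition.
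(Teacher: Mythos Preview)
Your argument is correct. The paper takes a slightly different route: instead of setting up a fixed-point map for the unknown $X_{i+1}^{\theta,\e}$, it writes the implicit step as $f(z)=y$ with
\[
f(z)=z-\theta h\inti b_\e(s,z)\de s,\qquad y=X_i^{\theta,\e}(x)+(1-\theta)h\inti b_\e(s,X_i^{\theta,\e}(x))\de s,
\]
computes $\nabla f(z)=I-\theta h\inti\nabla b_\e(s,z)\de s$, and invokes the inverse function theorem under the same smallness condition $h\|\nabla b_\e\|_{L^\infty}\leq 1/2$ guaranteed by the choice of $\e(h)$. The two arguments are really the same computation read two ways: your contraction constant $\theta/2$ is exactly what makes $\nabla f$ invertible in the paper's version. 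Your approach is a bit more self-contained (global existence and uniqueness come directly from Banach's theorem, without needing to argue that the local inverse is global), and you spell out the Lipschitz dependence on $x$, which the paper leaves implicit; the paper's formulation, on the other hand, makes the smoothness of each $X_i^{\theta,\e}$ more transparent once one knows that $f$ is a global diffeomorphism.
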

\begin{proof}
Let $\e>0$ and assume that the $i$-th element of the sequence $X_i^{\theta,\e}$ is given. We define the function
$$
f(z)\coloneqq z-\theta h \inti b_\e(s,z)\de s,
$$
and the vector 
$$
y=X_i^{\theta,\e}(x)+(1-\theta)h\inti b_\e(s,X_i^{\theta,\e}(x))\de s.
$$
Then, we define the element $X_{i+1}^{\theta,\e}$ of the sequence to be the solution of the system
$
f(z)=y.
$
Since $\nabla f(z)=I-\theta h \inti\nabla b_\e(s,z)\de s$, we can apply the inverse function theorem as long as
$$
h \|\nabla b_\e\|_{L^{\infty}}\leq \frac{1}{2},
$$
which holds thanks to our assumption on $\e$. Notice that if $\theta=0$ no condition on $\e$ is needed. 
\end{proof}

Now we show that we can control how the approximating flow obtained by solving the regularized $\theta$-method \eqref{eq:theta4} compresses the Lebesgue measure. The result is the following.

\begin{lem}
\label{lem:jac_num_irr}
Let $b\in L^1((0,T);W^{1,p}(\R^d))\cap L^\infty((0,T)\times\R^d)$ be a divergence-free velocity field with $1\leq p<\infty$. Assume that $\{b_\e\}_{\e>0}$ is an approximating sequence as in Definition \ref{def:approx}
and let $\{X_i^{\theta,\e}(x)\}_{i,\e}$ be a sequence of continuous maps solving \eqref{eq:theta4}. Then, there exists a constant $C>0$ which depends on $\bar C_1, \bar C_2$ such that, if we set $\e(h)=h^\frac{1}{2\beta}$ with $h >0$ small enough (compared to C), the following estimate holds
\begin{equation}\label{stima jacobiano}
e^{-CT}\leq \det\nabla X_i^{\theta,\e}\leq e^{CT}, \qquad \mbox{for every }i=0,\ldots,N.
\end{equation}
In particular, for every $1\leq q\leq\infty$ and $f\in L^q(\R^d)$ we have that
\begin{equation}\label{eq:est1}
\|f(X_i^{\theta,\e})\|_{L^q(B_r)}\leq\|f\|_{L^q(B_R)}e^{\frac{CT}{q}},
\end{equation}
for every $i=0,\ldots,N$, $r>0$, and $R\coloneqq r+T\bar C_3$. 
\end{lem}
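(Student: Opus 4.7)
The plan is to differentiate the $\theta$-method recursion \eqref{eq:theta4} with respect to the initial datum $x$, obtain a multiplicative update for the Jacobian matrix $\nabla X_i^{\theta,\e}$, and track its determinant through a logarithmic expansion. The calibration $\e = h^{1/(2\beta)}$ is chosen precisely so that the (potentially large) off-diagonal contribution of $\nabla b_\e$ enters only at second order per step and sums to a bounded quantity over the $N = T/h$ iterations. Setting
\[
A_j^\e(x) := \fint_{t_j}^{t_{j+1}} \nabla b_\e(s, X_j^{\theta,\e}(x)) \,\de s,
\]
and applying $\nabla_x$ to \eqref{eq:theta4} yields the matrix recursion
\[
(I - h\theta A_{i+1}^\e)\,\nabla X_{i+1}^{\theta,\e} = (I + h(1-\theta) A_i^\e)\,\nabla X_i^{\theta,\e}.
\]
By \eqref{eq:r2}, $h\,\|A_j^\e\|_{L^\infty} \leq \bar C_1\sqrt{h}$, which is less than $1/2$ once $h$ is small compared to $\bar C_1$; hence both matrices are close to $I$, invertible, and have positive determinant, so starting from $\nabla X_0^{\theta,\e} = I$ we obtain $\det\nabla X_i^{\theta,\e} > 0$ for every $i$.

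Taking determinants, iterating the recursion, and passing to logarithms gives
\[
\log\det\nabla X_i^{\theta,\e} = \sum_{j=0}^{i-1} \bigl[\log\det(I + h(1-\theta) A_j^\e) - \log\det(I - h\theta A_{j+1}^\e)\bigr].
\]
I would then invoke the expansion $\log\det(I + M) = \tr M - \frac{1}{2}\tr(M^2) + \ldots$, valid for $\|M\| < 1$, which rewrites each bracket as
\[
h(1-\theta)\tr A_j^\e + h\theta\tr A_{j+1}^\e + \mathcal{O}\bigl(h^2 \|A_j^\e\|^2 + h^2 \|A_{j+1}^\e\|^2\bigr).
\]
The key observation is that $\tr A_j^\e$ coincides with the averaged divergence $\fint \dive b_\e(s, X_j^{\theta,\e})\,\de s$, which is bounded by $\bar C_2$ thanks to \eqref{eq:r3}; summing the first-order terms therefore contributes at most $T\bar C_2$ in absolute value. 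The higher-order terms, using \eqref{eq:r2} and $\e^{2\beta} = h$, are bounded per step by $C h^2 \bar C_1^2 / \e^{2\beta} = C \bar C_1^2 h$, so their total contribution is at most $C T \bar C_1^2$. Combining these bounds yields $|\log\det\nabla X_i^{\theta,\e}| \leq CT$ with $C = C(\bar C_1, \bar C_2)$, which is \eqref{stima jacobiano}.

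For \eqref{eq:est1}, the recursion \eqref{eq:theta4} together with \eqref{eq:r4} gives $|X_{i+1}^{\theta,\e}(x) - X_i^{\theta,\e}(x)| \leq h \bar C_3$, whence $|X_i^{\theta,\e}(x) - x| \leq T\bar C_3$ and $X_i^{\theta,\e}(B_r) \subset B_{r + T\bar C_3} = B_R$. The map $X_i^{\theta,\e}$ is a $C^1$-diffeomorphism of $\R^d$, being (by Proposition \ref{prop:esistenza-p<d}) a composition of near-identity maps invertible via the inverse function theorem, so the change of variables $y = X_i^{\theta,\e}(x)$ combined with the lower bound $\det\nabla X_i^{\theta,\e} \geq e^{-CT}$ from \eqref{stima jacobiano} gives
\[
\int_{B_r} |f(X_i^{\theta,\e}(x))|^q \,\de x \leq e^{CT} \int_{X_i^{\theta,\e}(B_r)} |f(y)|^q \,\de y \leq e^{CT} \int_{B_R} |f(y)|^q \,\de y,
\]
so \eqref{eq:est1} follows after taking $q$-th roots (the case $q = \infty$ being immediate). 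The main obstacle is the delicate balance between the blow-up $\|\nabla b_\e\|_{L^\infty} \sim \e^{-\beta}$ and the number of iterations $N = T/h$: the scaling $\e = h^{1/(2\beta)}$ renders $h\|\nabla b_\e\|_{L^\infty} = \mathcal{O}(\sqrt h)$, which is small per step and keeps the quadratic remainder $h^2\|A_j^\e\|^2 = \mathcal{O}(h)$ per step (and thus $\mathcal{O}(1)$ in total); any weaker coupling would spoil the bound. The divergence-free bound \eqref{eq:r3} is indispensable to treat the first-order trace contribution separately, since without it one would only get a linear-in-$\|A\|$ bound $\mathcal{O}(\sqrt h)$ per step, summing to an unbounded $\mathcal{O}(1/\sqrt h)$.
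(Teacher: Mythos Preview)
Your proof is correct and follows essentially the same strategy as the paper: differentiate the recursion, pass to determinants, isolate the trace term as $\dive b_\e$ (controlled by \eqref{eq:r3}), and use the coupling $\e^{2\beta}=h$ so that the second-order remainder $h^2\|\nabla b_\e\|_{L^\infty}^2$ is $\mathcal O(h)$ per step and $\mathcal O(1)$ in total. The only cosmetic difference is that the paper expands $\det(I+M)$ via the characteristic-polynomial identity $\det(I+M)=1+\tr M+\sum_{j\ge 2}\sigma_j(\lambda)$ and then bounds the product $\prod(1+Ch)^{\pm 1}$, whereas you pass first to $\log\det(I+M)=\tr M+\mathcal O(\|M\|^2)$ and sum; the two bookkeepings are equivalent.
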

\begin{proof}
We first prove the estimate on the Jacobian of $X_i^{\theta,\e}$ \eqref{stima jacobiano} from below. We recall the classical formula for the determinant, see \cite[Chapter I, Section 3]{grothendieck1956theorie}
\begin{equation}\label{eq:det}
\det(I+h B)= 1+h\tr(A)+ \sum_{j=2} ^{d} h^j \sigma_j (\lambda_1 , \dots , \lambda_d),
\end{equation}
where $\sigma_j$ are symmetric polynomials of degree $j$ and $\lambda_i$ are the eigenvalues of $A$.
Then, from \eqref{eq:theta4} we know that
\begin{equation}
X_{i+1}^{\theta,\e}=X_i^{\theta,\e}+(1-\theta)h\inti b_\e(s,X_i^{\theta,\e})\de s+\theta h\inti b_\e(s,X_{i+1}^{\theta,\e})\de s,
\end{equation}
and computing the gradient it follows that
\begin{align*}
\nabla X_{i+1}^{\theta,\e}=\nabla X_i^{\theta,\e}&\left[I+(1-\theta) h\inti\nabla b_\e(s,X_i^{\theta,\e})\de s\right]\\
&+\theta h\nabla X_{i+1}^{\theta,\e}\inti\nabla b_\e(s,X_{i+1}^{\theta,\e})\de s.
\end{align*}
We move the last term above to the left hand side and computing the determinant we obtain the identity
\begin{align}
\det\nabla X_{i+1}^{\theta,\e} &\det\left[I-\theta h\inti\nabla b_\e(s,X_{i+1}^{\theta,\e})\de s\right]\nonumber\\
&=\det \nabla X_i^{\theta,\e} \det\left[I+(1-\theta) h\inti\nabla b_\e(s,X_i^{\theta,\e})\de s \right].\label{iterazione}
\end{align}
Furthermore, we use formula \eqref{eq:det} to compute
\begin{align}
\det\left[I-\theta h\inti\nabla b_\e(s,X_{i+1}^{\theta,\e})\de s\right]&=1-h \theta\inti \dive b_\e(s,X_{i+1}^{\theta,\e})\de s \nonumber\\
&+\sum_{j=2} ^{d} h^j \sigma_j (\lambda_1 , \dots, \lambda_d)= 1+ r_1 (\e,h),
\end{align}
and 
\begin{align}
\det\left[I+(1-\theta) h\inti\nabla b_\e(s,X_i^{\theta,\e})\de s\right]&=1+(1-\theta)h\inti\dive b_\e(s,X_i^{\theta,\e})\de s\nonumber\\
&+\sum_{j=2} ^{d} h^j \sigma_j (\lambda_1 , \dots, \lambda_d)= 1+ r_2 (\e,h).
\end{align}
We will now prove that, if $b_\e$ satisfies \eqref{eq:r2} and \eqref{eq:r3}, and provided that $\e$ is small enough, then there exists a uniform constant $\bar{C} >0$ such that
$$ 
| r_1 (\e, h)|, |r_2 (\e, h)| \leq \bar{C} h.
$$
Indeed, by \eqref{eq:r2} we have that $|\lambda_i| \lesssim \e^{- \beta} $ for every $i$, and thus $|\sigma_j(\lambda_1, \dots, \lambda_d)| \lesssim \e^{-j \beta}$. By the choice of $h(\e)=\e^{2 \beta}$ and \eqref{eq:r3} we thus have:
\begin{align}
   |r_1 (\e,h)| &= \left|-h \theta\inti \dive b_\e(s,X_{i+1}^{\theta,\e})\de s +\sum_{j=2} ^{d} h^j \sigma_j (\lambda_1 , \dots, \lambda_d) \right|\nonumber \\ 
   &\lesssim  \bar{C}_2 h +  \underbrace{h \e^{-2 \beta}}_{=1} h+ \underbrace{\sum_{j=3} ^{d} h^j \e^{-j \beta}}_{= o (h)} \leq \bar{C}h .   
\end{align}
The estimate on $r_2 (\e, h)$ follows in the same way. Thus, since $\det\nabla X_0^{\theta,\e}=1$, by iterating the chain of equalities \eqref{iterazione} and using the bounds on $r_1(\e, h)$ and $r_2 (\e, h)$ we just proved, we get that
\begin{equation}
\det\nabla X_{i+1}^{\theta,\e}\geq \frac{1}{(1+C h)^{i+1}}\geq \frac{1}{(1+C h)^{T/h}},
\end{equation}
provided that $h$ is small enough. Notice that the function
$
f(x)=(1+C_1 x)^{C_2/x}
$
is bounded and decreasing for $x\geq 0$, and moreover
$
\lim_{x\to 0}f(x)=e^{C_1 C_2}.
$
Thus, for $h$ small it holds $(1+Ch)^{\frac{T}{h}}\leq e^{CT}$, which eventually implies that
\begin{equation}
\det\nabla X_{i+1}^{\theta,\e}\geq e^{-CT}.
\end{equation}
With the very same argument we can obtain the upper bound on $\det\nabla X_{i+1}^{\theta,\e}$. The estimate in \eqref{eq:est1} follows from the change of variables formula for push-forward measures.
\end{proof}

The following lemma is the core of the proof of Theorem \ref{t:main1}.
\begin{lem}\label{lem:p>d}
Let $b\in L^1((0,T);W^{1,p}(\R^d))\cap L^\infty((0,T)\times\R^d)$ be a divergence-free velocity field, for some $1<p<\infty$, and let $X$ be its regular Lagrangian flow. Assume that $b_\e$ is an approximating sequence as in Definition \ref{def:approx}
and let $\{X_i^{\theta,\e}(\cdot)\}_{i,\e}$ be defined as in \eqref{eq:theta4} and $\tilde X^{h,\theta,\e}$ the corresponding function defined as in \eqref{eq:theta-interpolato}. Moreover, for any given $r,\delta>0$ define the functional 
\begin{equation}
\Qr(t)\coloneqq \int_{B_r}\log\left(1+\frac{|\tilde{X}^{h,\theta,\e}(t,x)-X(t,x)|}{\delta}  \right)\de x.
\end{equation}
Then, there exists a constant $C>0$ which depends on $\bar C_1, \bar C_2$ such that, if we set $\e(h)=h^\frac{1}{2\beta}$ with $h >0$ small enough (compared to C), the following estimate holds
\begin{equation}\label{eq:stima-sopralivelli-p>d}
\sup_{t\in [0,T]}\Qr(t)\lesssim 
\frac{h^\frac{\alpha}{2\beta}}{\delta}+\left(1+\frac{h}{\delta}\right),
\end{equation}
where the implicit constant depends upon $d,p,r,T,\|b\|_\infty,\|\nabla b\|_{L^1_tL^p_x}$ and the constants appearing in Definition \ref{def:approx}.
\end{lem}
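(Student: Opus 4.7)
The plan is to introduce the discrete analogue
$$
\widetilde{Q}_{\delta,N,r}(i) \coloneqq \int_{B_r} \log\!\left(1 + \frac{d_i(x)}{\delta}\right)\de x,
\qquad
d_i(x) \coloneqq |X_i^{\theta,\e}(x) - X(t_i,x)|,
$$
prove a telescoping estimate on $\widetilde{Q}_{\delta,N,r}(i+1) - \widetilde{Q}_{\delta,N,r}(i)$, and then recover the bound on $\sup_{t \in [0,T]} \Qr(t)$ by a direct perturbation for $t \in (t_i, t_{i+1})$. Working at the discrete times is crucial because the increment $(X_{i+1}^{\theta,\e}(x) - X_i^{\theta,\e}(x)) - (X(t_{i+1},x) - X(t_i,x))$ can be written as a single integral over $[t_i, t_{i+1}]$ in which the values $b_\e(s, X_j^{\theta,\e})$ coming from \eqref{eq:theta4} are naturally paired, under the same $\de s$, with $b(s, X(s,x)) = \dot X(s,x)$ coming from the integral form of the ODE. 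This avoids any residual time-oscillation term $b(s,\cdot) - b(t,\cdot)$, which would be uncontrollable under the mere $L^1$-in-time regularity of $b$.

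Using the elementary bound $|\log(1+u) - \log(1+v)| \leq |u-v|/(1 + \min(u,v))$ and the reverse triangle inequality, $\widetilde{Q}_{\delta,N,r}(i+1) - \widetilde{Q}_{\delta,N,r}(i)$ is at most
$$
\int_{B_r} \frac{|X_{i+1}^{\theta,\e} - X_i^{\theta,\e} - (X(t_{i+1},x) - X(t_i,x))|}{\delta + \min(d_i(x), d_{i+1}(x))}\de x.
$$
Substituting \eqref{eq:theta4} and the identity $X(t_{i+1},x) - X(t_i,x) = \int_{t_i}^{t_{i+1}} b(s, X(s,x))\de s$, the numerator is dominated by
$$
\int_{t_i}^{t_{i+1}} \bigl[(1-\theta) |b_\e(s, X_i^{\theta,\e}) - b(s, X(s,x))| + \theta |b_\e(s, X_{i+1}^{\theta,\e}) - b(s, X(s,x))|\bigr]\de s,
$$
and each term is decomposed as $|(b_\e - b)(s, X_j^{\theta,\e})|$ (mollification error) plus $|b(s, X_j^{\theta,\e}) - b(s, X(s,x))|$. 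The latter is estimated by the well-known pointwise inequality $|b(s,y) - b(s,z)| \leq C|y-z|(M\nabla b(s,y) + M\nabla b(s,z))$, valid a.e. for $b(s,\cdot) \in W^{1,p}$.

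The geometric key is to absorb the displacement $|X_j^{\theta,\e}(x) - X(s,x)|$ into the denominator with a uniformly bounded multiplier. For $s \in [t_i, t_{i+1}]$ and $j \in \{i, i+1\}$, the bound $|X(t_j, x) - X(s, x)| \leq h\|b\|_{L^\infty}$ gives $|X_j^{\theta,\e}(x) - X(s,x)| \leq d_j(x) + Ch$; moreover, since $|X_{i+1}^{\theta,\e} - X_i^{\theta,\e}| \leq Ch$ and $|X(t_{i+1},x) - X(t_i,x)| \leq Ch$, we have $|d_{i+1} - d_i| \leq Ch$ and hence $d_j \leq \min(d_i, d_{i+1}) + Ch$. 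Consequently
$$
\frac{|X_j^{\theta,\e}(x) - X(s,x)|}{\delta + \min(d_i(x), d_{i+1}(x))} \leq 1 + \frac{Ch}{\delta},
$$
while the mollification error is bounded simply by $\delta + \min \geq \delta$. Summing over $i$, applying Lemma \ref{lem:jac_num_irr} (with $q=1$) and the incompressibility of $X$ to change variables into $B_R$, and then Hardy--Littlewood--Wiener (which requires $p > 1$) to pass from $M\nabla b$ to $\nabla b$, I obtain
$$
\max_{0 \leq i \leq N} \widetilde{Q}_{\delta,N,r}(i) \lesssim \frac{\|b_\e - b\|_{L^1_t L^1_x}}{\delta} + \Bigl(1 + \frac{h}{\delta}\Bigr)\|\nabla b\|_{L^1_t L^p_x},
$$
which by \eqref{eq:r1} and the choice $\e = h^{1/(2\beta)}$ yields the stated bound at the nodes $t_i$.

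Finally, for $t \in (t_i, t_{i+1})$, the explicit formula \eqref{eq:theta-interpolato} combined with the $L^\infty$ bounds on $b$ and $b_\e$ gives $|\tilde X^{h,\theta,\e}(t,x) - X(t,x)| \leq d_i(x) + Ch$, so $\log(1 + |\tilde X^{h,\theta,\e} - X|/\delta) \leq \log(1 + d_i/\delta) + Ch/\delta$, and integrating over $B_r$ closes the estimate on $\sup_t \Qr(t)$. The main obstacle is the careful bookkeeping of the various $O(h)$ increments, so that each appears with a denominator $\delta + \min(d_i, d_{i+1})$ that absorbs it with the controlled prefactor $1 + Ch/\delta$ rather than the unbounded $C/\delta$ that a naive triangle-inequality split would produce; working with the discrete functional at the nodes $t_i$ is precisely what makes this bookkeeping possible.
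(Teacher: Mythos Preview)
Your proof is correct and uses the same core ingredients as the paper: the telescoping of the log functional, the pointwise maximal-function inequality for $W^{1,p}$ functions (Lemma~\ref{lem:diff-quotients}), the change of variables via Lemma~\ref{lem:jac_num_irr} and the incompressibility of the regular Lagrangian flow, and Lemma~\ref{est:maximal} to pass from $M\nabla b$ to $\nabla b$ in $L^p$.

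The organization differs in a way worth recording. The paper telescopes directly from a generic $t$ back to $0$ with denominator $\delta+|X_j^{\theta,\e}(x)-X(t_j,x)|$, and then splits the numerator into three or four pieces by inserting the intermediate points $b(s,X_j^{\theta,\e})$, $b(s,X(t_j,x))$ (and, for the $\theta$-part, also $b(s,X_{j+1}^{\theta,\e})$); each piece is bounded separately, yielding the terms \eqref{pezzo1}--\eqref{pezzo4}. You instead work first at the nodes with the symmetric denominator $\delta+\min(d_i,d_{i+1})$, apply the maximal-function inequality in one shot to $|b(s,X_j^{\theta,\e})-b(s,X(s,x))|$, and absorb all the $O(h)$ displacements into the single ratio bound $|X_j^{\theta,\e}(x)-X(s,x)|/(\delta+\min(d_i,d_{i+1}))\le 1+Ch/\delta$; intermediate times are handled afterwards by the perturbation $\log(1+(d_i+Ch)/\delta)\le\log(1+d_i/\delta)+Ch/\delta$. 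Your packaging is slightly more economical (one ratio estimate replaces the paper's separate treatment of \eqref{pezzo2}, \eqref{pezzo3}, \eqref{pezzo4}), while the paper's splitting makes the provenance of each contribution to the final bound more transparent. Neither route gains anything quantitative over the other.
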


In the course of the proof of the lemma, we will need some tools from harmonic analysis. We first introduce the {\em Hardy-Littlewood maximal function}.
\begin{defn}\label{def:maxfunc}
Let $f\in L^1_{\mathrm{loc}}(\R^d)$, we define $Mf$ the maximal function of f as
$$
Mf(x)=\sup_{r>0}\frac{1}{\mathscr{L}^d(B_r)}\int_{B_r(x)}|f(y)|\de y, \hspace{1cm}\mbox{for every }x\in\R^d.
$$
\end{defn}
Note that the function $Mf$ is finite for a.e. $x\in\R^d$ if $f\in L^1(\R^d)$. 
The following estimates hold, for which we refer to \cite{Stein}.
\begin{lem}\label{est:maximal}
For every $1<p\leq\infty$ we have the strong estimate
$$
\|Mf\|_{L^p}\leq C_{d,p}\|f\|_{L^p}.
$$
\end{lem}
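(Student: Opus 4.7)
The plan is to follow the classical Hardy--Littlewood argument, proving first a weak-type $(1,1)$ estimate for $M$, noting the trivial $L^\infty$ bound, and then interpolating via Marcinkiewicz to get the strong $L^p$ bound for $1 < p < \infty$.

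\medskip

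\textbf{Step 1: weak-type $(1,1)$ inequality.} For $\lambda > 0$, I would consider the superlevel set $E_\lambda \coloneqq \{x \in \R^d : Mf(x) > \lambda\}$. For every $x \in E_\lambda$ there exists (by definition of supremum) a radius $r_x > 0$ such that $\int_{B_{r_x}(x)} |f(y)| \de y > \lambda \mathscr{L}^d(B_{r_x}(x))$. Restricting to a bounded subset $K \subset E_\lambda$ (so that the radii can be assumed uniformly bounded, since $\int_{B_{r_x}(x)} |f| \leq \|f\|_{L^1}$ forces $r_x^d \lesssim \|f\|_{L^1}/\lambda$), I would apply the Vitali $5r$-covering lemma to the family $\{B_{r_x}(x) : x \in K\}$ to extract a countable, pairwise disjoint subfamily $\{B_{r_{x_k}}(x_k)\}_k$ such that $K \subset \bigcup_k B_{5 r_{x_k}}(x_k)$. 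Then
\begin{equation*}
\mathscr{L}^d(K) \leq 5^d \sum_k \mathscr{L}^d(B_{r_{x_k}}(x_k)) \leq \frac{5^d}{\lambda} \sum_k \int_{B_{r_{x_k}}(x_k)} |f(y)| \de y \leq \frac{5^d}{\lambda} \|f\|_{L^1},
\end{equation*}
and letting $K \uparrow E_\lambda$ gives $\mathscr{L}^d(E_\lambda) \leq 5^d \lambda^{-1}\|f\|_{L^1}$.

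\medskip

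\textbf{Step 2: $L^\infty$ bound.} This is immediate: $Mf(x) \leq \|f\|_{L^\infty}$ for every $x$, so $\|Mf\|_{L^\infty} \leq \|f\|_{L^\infty}$. In particular $M$ is bounded from $L^\infty$ to $L^\infty$ with constant $1$ (hence of weak type $(\infty,\infty)$).

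\medskip

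\textbf{Step 3: Marcinkiewicz interpolation.} Since $M$ is sublinear (it satisfies $M(f+g) \leq Mf + Mg$), and since it is of weak type $(1,1)$ and of strong (hence weak) type $(\infty,\infty)$, the Marcinkiewicz interpolation theorem yields strong type $(p,p)$ for every $1 < p < \infty$, with the explicit constant $C_{d,p}$ coming from the standard interpolation procedure: one splits $f = f_1 + f_2$ with $f_1 = f \chi_{\{|f| > \lambda/2\}}$ and $f_2 = f - f_1$, uses the layer-cake formula $\|Mf\|_{L^p}^p = p \int_0^\infty \lambda^{p-1} \mathscr{L}^d(\{Mf > \lambda\}) \de \lambda$, and bounds $\mathscr{L}^d(\{Mf > \lambda\}) \leq \mathscr{L}^d(\{Mf_1 > \lambda/2\}) \leq (2 \cdot 5^d /\lambda) \int_{\{|f| > \lambda/2\}} |f|$. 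Fubini then gives $\|Mf\|_{L^p} \leq C_{d,p} \|f\|_{L^p}$ as required.

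\medskip

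The main technical ingredient is Step 1, where the combinatorial heart of the argument lies in the Vitali covering lemma; once this is in hand, Steps 2 and 3 are standard and quantitative. Since the statement merely recalls a result from \cite{Stein}, I would in practice simply refer to that reference rather than reproducing the interpolation details.
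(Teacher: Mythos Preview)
Your proof is correct and is exactly the classical Hardy--Littlewood argument found in Stein's book. The paper itself does not prove this lemma at all: it simply states the estimate and refers to \cite{Stein}, which is precisely what you suggest doing in your last paragraph, so your approach coincides with the paper's (non-)proof.
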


The maximal function operator is useful to estimate the difference quotients of a $W^{1,p}$ function.

\begin{lem}\label{lem:diff-quotients}
Let $f\in W^{1.p}(\R^d)$ then there exists a negligible set $\mathcal{N}\subset\R^d$ such that
\begin{equation}
|f(x)-f(y)|\leq C(d) |x-y|\left( M \nabla f(x)+M \nabla f(y) \right),
\end{equation}
for every $x,y\in\R^d\setminus \mathcal{N}$, where $\nabla f$ is the distributional derivative of $f$.
\end{lem}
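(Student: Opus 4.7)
The plan is to establish the pointwise bound at Lebesgue points of $f$ by combining the Lebesgue differentiation theorem with the classical Poincar\'e inequality on balls, via a dyadic telescoping argument on concentric averages. Let $\mathcal{N}\subset\R^d$ be the complement of the Lebesgue set of $f$; since $f\in L^p(\R^d)\subset L^1_{\mathrm{loc}}(\R^d)$, we have $\mathcal{L}^d(\mathcal{N})=0$, and for every $x\in\R^d\setminus\mathcal{N}$,
\[
f(x)=\lim_{s\to 0^+}\,f_{B_s(x)},\qquad \text{where } f_B\coloneqq\fint_B f\de z.
\]

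Fix $x,y\in\R^d\setminus\mathcal{N}$ and set $r=|x-y|$. The first step is to control $|f(x)-f_{B_r(x)}|$ via the telescoping series
\[
f(x)-f_{B_r(x)}=\sum_{k=0}^{\infty}\bigl(f_{B_{2^{-(k+1)}r}(x)}-f_{B_{2^{-k}r}(x)}\bigr),
\]
whose convergence to $f(x)-f_{B_r(x)}$ is precisely the Lebesgue point property. Using the inclusion $B_{2^{-(k+1)}r}(x)\subset B_{2^{-k}r}(x)$ one estimates each term by
\[
\bigl|f_{B_{2^{-(k+1)}r}(x)}-f_{B_{2^{-k}r}(x)}\bigr|\leq \frac{|B_{2^{-k}r}|}{|B_{2^{-(k+1)}r}|}\fint_{B_{2^{-k}r}(x)}|f-f_{B_{2^{-k}r}(x)}|\de z\leq 2^d\,C(d)\cdot 2^{-k}r\cdot M\nabla f(x),
\]
where the last inequality applies the Poincar\'e inequality on balls $\fint_{B_s(x)}|f-f_{B_s(x)}|\leq C(d)\,s\fint_{B_s(x)}|\nabla f|$ followed by Definition \ref{def:maxfunc}. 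Summing the geometric series yields $|f(x)-f_{B_r(x)}|\leq C(d)\,r\,M\nabla f(x)$, and the symmetric argument gives $|f(y)-f_{B_r(y)}|\leq C(d)\,r\,M\nabla f(y)$.

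To close the estimate I would compare the two averages $f_{B_r(x)}$ and $f_{B_r(y)}$ through a common larger ball. Since $|x-y|=r$, both balls $B_r(x)$ and $B_r(y)$ are contained in $B_{2r}(x)$, and hence
\[
\bigl|f_{B_r(y)}-f_{B_{2r}(x)}\bigr|\leq \frac{|B_{2r}|}{|B_r|}\fint_{B_{2r}(x)}|f-f_{B_{2r}(x)}|\de z\leq C(d)\,r\,M\nabla f(x),
\]
with an analogous estimate for $|f_{B_r(x)}-f_{B_{2r}(x)}|$. The triangle inequality then yields $|f_{B_r(x)}-f_{B_r(y)}|\leq C(d)\,r\,M\nabla f(x)$. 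Combining this with the two dyadic bounds from the previous paragraph,
\[
|f(x)-f(y)|\leq |f(x)-f_{B_r(x)}|+|f_{B_r(x)}-f_{B_r(y)}|+|f_{B_r(y)}-f(y)|\leq C(d)\,|x-y|\bigl(M\nabla f(x)+M\nabla f(y)\bigr),
\]
which is exactly the claim.

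The only nontrivial analytical input is the $(1,1)$-Poincar\'e inequality on balls for $W^{1,1}$ functions (hence, a fortiori, for $W^{1,p}$), which is classical and can be derived by writing differences $f(z)-f(z')$ as line integrals of $\nabla f$ and using Fubini; no real obstacle arises there. The only delicate point is the identification of the exceptional set $\mathcal{N}$: the telescoping series representation of $f(x)-f_{B_r(x)}$ requires $x$ to be a Lebesgue point of $f$, which is the reason $\mathcal{N}$ must be taken as the complement of the Lebesgue set.
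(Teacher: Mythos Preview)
The paper does not supply its own proof of this lemma: it is stated as a known tool, in the same spirit as the maximal inequality just before it (for which the authors refer to Stein). Your argument is correct and is precisely the standard proof one finds in the literature: telescoping over dyadic balls, the $(1,1)$-Poincar\'e inequality on each ball, and a bridge through a common enlarged ball $B_{2r}(x)$ to compare the two averages. There is nothing to compare against in the paper itself, and nothing to fix in your write-up.
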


\begin{proof}[Proof of Lemma~\ref{lem:p>d}]
To simplify the notations we define for $t\in [t_i,t_{i+1})$ the map
$$
\tilde X_i^{\theta,\e}(t,x)\coloneqq  X_{i}^{\theta,\e}(x)+\frac{t-t_i}{h}(X_{i+1}^{\theta,\e}(x)-X_i^{\theta,\e}(x)),
$$
so that $\tilde X^{h,\theta,\e}(t,x) =\tilde X_i^{\theta,\e}(t,x)$ if $t\in [t_i,t_{i+1})$. Moreover, we notice that $\tilde X^{h,\theta,\e}(t,x)=\tilde X_i^{\theta,\e}(t,x) = X_i ^{\theta , \e} (x)$ if $t= t_i$. We use the monotonicity and the concavity of the function $x\mapsto \log(1+x)$ and in particular that, for every $a,b\geq 0$, it holds
$$
\log(1+b)-\log(1+a)\leq [\log(1+x)']_{|x=a}|b-a|=\frac{|b-a|}{1+a},
$$
to compute
\begin{align*}
&\log\left(1+\frac{|\tilde{X}^{h,\theta,\e}(t,x)-X(t,x)|}{\delta}  \right)-\log\left(1+\frac{|\tilde{X}^{h,\theta,\e}(t_i,x)-X(t_i,x)|}{\delta} \right)\\
&\leq \frac{|\tilde{X}^{\theta,\e}_i(t,x)-X_i^{\theta,\e}(x)-(X(t,x)-X(t_i,x))|}{\delta+|\tilde X^{h,\theta,\e}(t_i,x) -X(t_i,x)|}\\
&=\frac{\left|(t-t_i)[(1-\theta)\hspace{-0.05cm}\fint_{t_i}^{t_{i+1}}\hspace{-0.05cm}b_\e(s,X_i^{\theta, \varepsilon} (x))\de s+ \theta \hspace{-0.05cm}\inti\hspace{-0.05cm} b_\e(s,X_{i+1}^{\theta, \varepsilon} (x))\de s]\hspace{-0.05cm}-\hspace{-0.05cm}\int_{t_i}^{t}\hspace{-0.05cm}b(s,X(s,x))\de s\right|}{\delta+|X_i^{\theta,\e}(x)-X(t_i,x)|}
\end{align*}
for every $t \in [t_i, t_{i+1})$. By iterating the estimate backward up to $t_0 = 0$ and using a telescopic argument, together with the bound $|t - t_i| \leq h$, we obtain the following estimate on each interval $(t_i, t_{i+1})$. Without loss of generality, we prove the claim for $t = T$. It holds:
\begin{align}
&\int_{B_r}\log\left(1+\frac{|\tilde{X}^{h,\theta,\e}(T,x)-X(T,x)|}{\delta}  \right)\de x\nonumber\\
&\leq \sum_{j=0}^{N-1}\int_{B_r}\frac{\left|\int_{t_j}^{t}[(1-\theta)b_\e(s,X_j^{\theta,\e}(x))+\theta b_\e(s,X^{\theta,\e}_{j+1}(x)) -b(s,X(s,x))]\de s\right|}{\delta+|X_j^{\theta,\e}(x) -X(t_j,x)|} \de x\nonumber\\
&\leq \sum_{j=0}^{N-1}\int_{B_r}\int_{t_j}^{t_{j+1}}\frac{|[(1-\theta)b_\e(s,X_j^{\theta,\e}(x))+\theta b_\e(s,X^{\theta,\e}_{j+1}(x)) -b(s,X(s,x))]|}{\delta+|X_j^{\theta,\e}(x)-X(t_j,x)|} \de x\de s.\label{cristo aiutami tu}
\end{align}
We split the inequality \eqref{cristo aiutami tu} as
\begin{align}
&\int_{B_r}\log\left(1+\frac{|\tilde{X}^{h,\theta,\e}(T,x)-X(T,x)|}{\delta}  \right)\de x\nonumber\\
&\leq \sum_{j=0}^{N-1}\int_{B_r}\frac{\int_{t_j}^{t_{j+1}}|[(1-\theta)b_\e(s,X_j^{\theta,\e}(x))+\theta b_\e(s,X^{\theta,\e}_{j+1}(x)) -b(s,X(s,x))]| \de s}{\delta+|\tilde{X}^{h,\theta,\e}(t_j,x)-X(t_j,x)|} \de x\nonumber\\
&\leq (1-\theta)\sum_{j=0}^{N-1}\int_{B_r}\int_{t_j}^{t_{j+1}}\frac{|b_\e(s,X_j^{\theta,\e}(x)) -b(s,X(s,x))|}{\delta+|X_j^{\theta,\e}(x)-X(t_j,x)|}\de s\de x \label{stima cacca 1}\\
&+\theta \sum_{j=0}^{N-1}\int_{B_r}\int_{t_j}^{t_{j+1}}\frac{|b_\e(s,X^{\theta,\e}_{j+1}(x)) -b(s,X(s,x))|}{\delta+|X_j^{\theta,\e}(x)-X(t_j,x)|} \de s\de x.\label{stima cacca 2}
\end{align}
We start by considering \eqref{stima cacca 1}: we add and subtract $b(s,X_j^{\theta,\e}(x))+b(s,X(t_j,x))$ and we get that
\begin{align*}
&\sum_{j=0}^{N-1}\int_{B_r}\int_{t_j}^{t_{j+1}}\frac{|b_\e(s,X_j^{\theta,\e}(x)) -b(s,X(s,x))|}{\delta+|X_j^{\theta,\e}(x)-X(t_j,x)|}\de s\de x\\
&\leq \sum_{j=0}^{N-1}\int_{t_j}^{t_{j+1}}\hspace{-0.2cm}\int_{B_r}\frac{|b_\e(s,X_j^{\theta,\e}(x))-b(s,X_j^{\theta,\e}(x))|}{\delta}\de x\de s\\
&+\sum_{j=0}^{N-1}\int_{t_j}^{t_{j+1}}\hspace{-0.2cm}\int_{B_r}\frac{|b(s,X_j^{\theta,\e}(x))-b(s,X(t_j,x))|}{|X_j^{\theta,\e}(x)-X(t_j,x)|}+\frac{|b(s,X(t_j,x))-b(s,X(s,x))|}{\delta}\de x\de s .
\end{align*}
Now we are going to repeatedly use Lemma \ref{lem:jac_num_irr} by changing variables, so the constants that appear will always depend on $e^{CT}$ and we will omit writing it explicitly.
For the first term on the right hand side above we have that
\begin{equation}\label{pezzo1}
\sum_{j=0}^{N-1}\int_{t_j}^{t_{j+1}}\int_{B_r}\frac{|b_\e(s,X_j^{\theta,\e}(x))-b(s,X_j^{\theta,\e}(x))|}{\delta}\de x\de s\lesssim \frac{\|b_\e-b\|_{L^1_tL^1_x}}{\delta}.
\end{equation}
For the second term, we use Lemma \ref{est:maximal}, H\"older's inequality and Lemma \ref{lem:diff-quotients} to obtain
\begin{align}\label{pezzo2}
\sum_{j=0}^{N-1}\int_{t_j}^{t_{j+1}}\hspace{-0.2cm}\int_{B_r}&\frac{|b(s,X_j^{\theta,\e}(x))-b(s,X(t_j,x))|}{|X_j^{\theta,\e}(x)-X(t_j,x)|}\de x\de s\lesssim \sum_{j=0} ^{N-1} \int_{t_j} ^{t_{j+1}} \hspace{-0.2cm}\int_{B_R} M|\nabla b| (s,x) \de x \de s \nonumber\\
& \leq \sum_{j=0} ^{N-1} \mathcal{L} ^d (B_R ) ^{\frac{p-1}{p}}  \int_{t_j} ^{t_{j+1}} ||M|\nabla b| (s, \cdot )||_{L^p (B_R)} \de x \de s  \lesssim  R^{\frac{p-1}{p}} ||\nabla  b||_{L^1 _t L^p _x} ,
\end{align}
where we recall that $R = r + T||b||_{L^ \infty} $. Lastly, for the third term we use again Lemma~\ref{est:maximal}, Lemma \ref{lem:diff-quotients}, the estimate
$$
|X(t_j,x))-X(s,x)|\leq \int_{t_j}^s|b(\tau,X(\tau,x))|\de\tau\leq h\|b\|_\infty, \qquad \mbox{for }s\in(t_j,t_{j+1}),
$$
and H\"older's inequality to get that 
\begin{equation}\label{pezzo3}
\sum_{j=0}^{N-1}\int_{t_j}^{t_{j+1}}\int_{B_r}\frac{|b(s,X(t_j,x))-b(s,X(s,x))|}{\delta }\de x\de s\lesssim R^\frac{p-1}{p}\frac{h}{\delta}\|b\|_{\infty}\|\nabla b\|_{L^1_tL^p_x}.
\end{equation}
The quantity in \eqref{stima cacca 2} can be handled with similar estimates adding and subtracting $b(s,X_{j+1}^{\theta,\e}(x))+b(s,X_j^{\theta,\e}(x))+b(s,X(t_j,x))$, which means we have to estimate one more piece. Precisely, we use the following estimate for each term in the sum:
\begin{align*}
&\frac{|b_\varepsilon (s,X_{j+1}^{\theta,\e}(x))-b(s,X(s,x))|}{\delta+|X_j^{\theta,\e}(x)-X(t_j,x)|} \leq \frac{|b_\varepsilon (s,X_{j+1}^{\theta,\e}(x)) -b(s,X_{j+1}^{\theta,\e}(x))|}{\delta+|X_j^{\theta,\e}(x)-X(t_j,x)|} \\
&\hspace{1cm} + \frac{|b(s,X_{j+1}^{\theta,\e}(x)) -b(s,X_j^{\theta,\e}(x)) |}{\delta+|X_j^{\theta,\e}(x)-X(t_j,x)|} +\frac{|b(s,X_j^{\theta,\e}(x)) -b(s,X(t_j,x))|}{\delta+|X_j^{\theta,\e}(x)-X(t_j,x)|}  \\
&\hspace{1cm}+  \frac{|b(s,X(t_j,x)) - b(s,X(s,x))|}{\delta+|X_j^{\theta,\e}(x)-X(t_j,x)|}. 
\end{align*}
The first, third and fourth terms in the above sum can be estimated as in \eqref{pezzo1}, \eqref{pezzo2} and \eqref{pezzo3} respectively. As for the third term, using again Lemma \ref{lem:diff-quotients}:
\begin{align*}
&\frac{|b(s,X_{j+1}^{\theta,\e}(x))-b(s,X_j^{\theta,\e}(x))|}{\delta+|X_j^{\theta,\e}(x)-X(t_j,x)|}\\
&\hspace{1cm}\lesssim \frac{|X_{j+1}^{\theta,\e}(x)-X_j^{\theta,\e}(x)|}{\delta}(M|\nabla b|(s,X_{j+1}^{\theta,\e}(x))+M|\nabla b|(s,X_{j}^{\theta,\e}(x)))\\
&\hspace{1cm} \lesssim\frac{h }{\delta} ||b||_{\infty}[M|\nabla b|(s,X_{j+1}^{\theta,\e}(x))+M|\nabla b|(s,X_{j}^{\theta,\e}(x))],
\end{align*}
where in the last line we used the definition \eqref{eq:theta4} of the regularized $\theta$-method and that $b \in L^ \infty ((0,T) \times \mathbb{R}^d)$. Then, integrating in space and time, using H\"older's inequality, Lemma \ref{est:maximal}, and Lemma \ref{lem:jac_num_irr} we get that 
\begin{equation}\label{pezzo4}
\sum_{j=0}^{N-1}\int_{t_j}^{t_{j+1}}\int_{B_r}\frac{|b(s,X_{j+1}^{\theta,\e}(x))-b(s,X_j^{\theta,\e}(x))|}{\delta+|X_j^{\theta,\e}(x)-X(t_j,x)|}\de x\de s\lesssim \frac{h }{\delta} ||b||_{\infty} R^\frac{p-1}{p}\|\nabla b\|_{L^1_t L^p_x}.
\end{equation}
Collecting the estimates in \eqref{pezzo1}, \eqref{pezzo2}, \eqref{pezzo3}, and \eqref{pezzo4} we finally obtain that 
\begin{align}
\sup_{t\in(0,T)}\Qd^r(t)&\lesssim \frac{\|b_\e-b\|_{L^1_tL^1_x}}{\delta}+  C(d,p,R)\left(1+\frac{h }{\delta} ||b||_\infty \right)\|\nabla b\|_{L^1_tL^p_x}\nonumber\\
& \lesssim \frac{h^\frac{\alpha}{2\beta}}{\delta}+ C(d,p,R) \left(\left(1+\frac{h}{\delta}\right)\|\nabla b\|_{L^1_tL^p_x} \right)\label{rem:lp}
\end{align}
where we used the assumption \eqref{eq:r1} and the regularization of $b$ and the definition of $h$. The proof is complete.
\end{proof}

\begin{rem}
Notice that one can estimate the term \eqref{pezzo1} as follows
\begin{equation}
\sum_{j=0}^{i}\int_{t_j}^{t_{j+1}}\int_{B_r}\frac{|b_\e(s,X_j^{\theta,\e}(x))-b(s,X_j^{\theta,\e}(x))|}{\delta}\de x\de s\leq CR^{\frac{p-1}{p}}\frac{\|b_\e-b\|_{L^1_tL^p_x}}{\delta},
\end{equation}
with $R\coloneqq r+T\|b\|_\infty$. Thus, Lemma \ref{lem:p>d}, and consequently also Theorem \ref{t:main1}, hold if we consider the condition
\begin{equation}
    \|b_\e-b\|_{L^1_tL^p_x}\leq C\e^\alpha
\end{equation}
for some $C>0$, bounding the $L^1 _t L^p _x$ norm instead of the $L^1 _t L^1 _x$ norm as in \eqref{eq:r1}. 
\end{rem}

\begin{rem}
The lemma just proven could be simplified in the case $p>d$. In this case, one could use an anisotropic version of Lemma \ref{lem:diff-quotients} (see \cite[Lemma 5.1]{CC}), as done for the explicit Euler method \cite{Cort}, which would avoid the need for a change of variable in the approximated flow. However, in order to define the maps $X^{\theta, \e}_i$ via Proposition \ref{prop:esistenza-p<d}, the spatial approximation is necessary, which implies a restriction on $h$ and $\e$. Therefore, the use of the anisotropic estimate does not provide any real advantage, since spatial approximation remains a fundamental requirement.
\end{rem}

We can now prove Theorem \ref{t:main1}.
\begin{proof}[Proof of Theorem \ref{t:main1}]
First of all, let $\eta>0$ and define the set 
$$
A_\eta\coloneqq \{x\in \R^d:|\tilde{X}^{h,\theta,\e}(t,x)-X(t,x)|\geq \eta \}.
$$
Then, for any fixed $r>0$, the following estimate holds
\begin{align*}
\int_{B_r}|\tilde{X}^{h,\theta,\e}(t,x)-X(t,x)|\de x &\leq C\eta r^d+\|\tilde X^{h,\theta,\e}(t,\cdot)\|_{L^\infty(B_r)}\mathscr{L}^d(B_r\cap A_\eta)\\
&+\|X(t,\cdot)\|_{L^\infty(B_r)}\mathscr{L}^d(B_r\cap A_\eta)
\end{align*}
We apply Lemma \ref{lem:p>d} in order to bound the second term above: we obtain that
\begin{align}
\mathscr{L}^d\{ x\in B_r: |\tilde{X}^{h,\theta,\e}(t,x)-X(t,x)|>\eta \}&\leq \frac{\Qr(t)}{\log(1+\eta/\delta)}\nonumber\\
& \leq \frac{C}{\log(1+\eta/\delta)}\left(\frac{\e^\alpha}{\delta}+1+\frac{h(\e)}{\delta} \right).\label{stima sopralivelli}
\end{align}
We define $\delta(\e)\coloneqq \max\left\{\e^\alpha,h(\e)\right\}$. Thus, it follows that
\begin{equation}
\int_{B_r}|\tilde{X}^{h,\theta,\e}(t,x)-X(t,x)|\de x\leq C\eta r^d+\frac{C(d,p,r,T,\|b\|_{L^\infty},\|\nabla b\|_{L^1L^p})}{\log(1+\frac{\eta}{\delta(\e)})}.
\end{equation}
Finally, by choosing $\eta=\sqrt{\delta(\e)}$ we obtain
\begin{equation}\label{est:stimaL1-p>d}
\int_{B_r}|\tilde{X}^{h,\theta,\e}(t,x)-X(t,x)|\de x\leq C r^d\sqrt{\delta(\e)}+\frac{C(d,p,r,T,\|b\|_{L^\infty},\|\nabla b\|_{L^1L^p})}{|\log \delta(\e)|},
\end{equation}
and then by using the (polynomial) relation between $\e$ and $h$, the rate of convergence is given by the slowest term, namely $|\log h|^{-1}$. This concludes the proof. 
\end{proof}

\begin{rem}
We require the bounds \eqref{eq:r2} and \eqref{eq:r3} for the approximating sequence $b_\e$, rather than the more natural $L^1_tL^\infty_x$ condition, in order to obtain an explicit convergence rate in terms of $h$. The $L^1_tL^\infty_x$ bound would still ensure convergence, but the constants would depend on the equi-integrability in time of $\|\nabla b_\e(t,\cdot)\|_\infty$ and $\|\dive b_\e(t,\cdot)\|_\infty$, preventing explicit rates
\end{rem}

\section{Numerical simulations}\label{sect:simulations}
Let us complement the presented rigorous analysis with selected numerical experiments related to the error estimate provided in Theorem~\ref{t:main1}. To this purpose, we consider problem \eqref{eq:ode} with the velocity field
\begin{equation}\label{eq:prtest}
b(x_1,x_2)=\left(
-2(\alpha+1)(x_1^2+x_2^2)^{\frac{\alpha-1}{2}}x_2,\,
2(\alpha+1)(x_1^2+x_2^2)^{\frac{\alpha-1}{2}}x_1
\right)\in W^{1,p}_\mathrm{loc}(\R^2),
\end{equation}
where $1-\frac2p<\alpha<1$ and $p>2$. The vector field in \eqref{eq:prtest} is smooth away from the origin, where it has a singularity, so no spatial regularization is needed. The associated ODE system admits the explicit solution
\begin{equation}
\label{soluzione esplicita}
\begin{cases}
x_1(t)=r_0\cos\left(\beta_0+2(\alpha+1)r_0^{\alpha-1}t\right)\\
x_2(t)=r_0\sin\left(\beta_0+2(\alpha+1)r_0^{\alpha-1}t\right)
\end{cases}
\end{equation}
where $r_0=\sqrt{x_1(0)^2+x_2(0)^2}$ and $\beta_0=\arctan\left(\frac{x_2(0)}{x_1(0)}\right)$.
We apply the $\theta$-method \eqref{eq:theta4} for selected values of $h$ and $\theta\in[0,1]$. Depending on the choice of the initial condition, we observe the following behavior:
\begin{itemize}
\item When the initial condition is placed sufficiently far from the origin, the convergence rate appears to be faster than $1/|\log h|$, yet still slower than linear. Moreover, the trajectories obtained by the $\theta$-method are circles, in agreement with the exact solution of the system.
\item When the initial datum is chosen in a small neighborhood of the origin, the observed convergence rate aligns with the logarithmic rate predicted by our theoretical analysis.
\end{itemize}
Figure \ref{spettacolo3} displays the pattern of the error estimate and highlight its decay in comparison with the reference logarithmic slope.\\

\begin{figure}[htbp]
    \centering
    \begin{subfigure}{0.45\textwidth}
        \includegraphics[width=\linewidth]{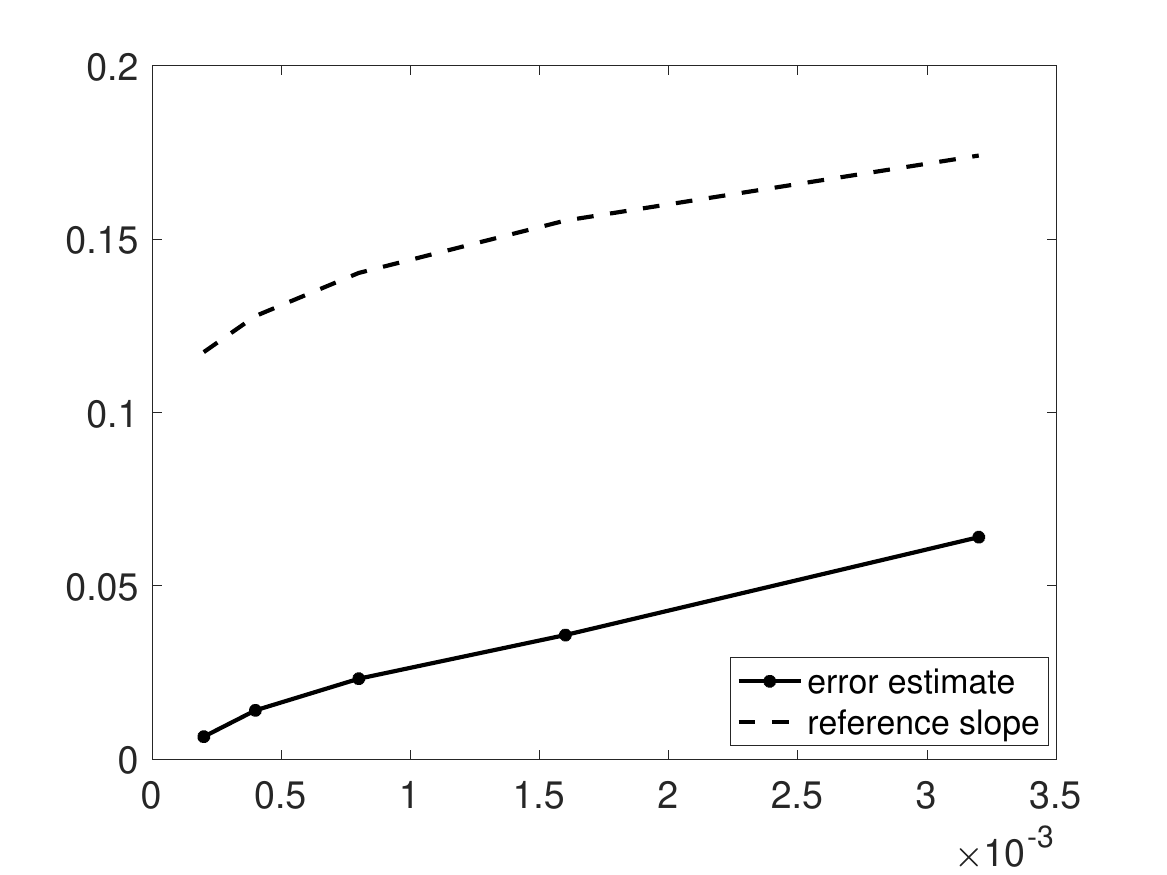}
        \caption{$\theta=0.2$, $p=3$, $\alpha=0.36$.}
        \label{spettacolo}
    \end{subfigure}
    \hfill
    \begin{subfigure}{0.45\textwidth}
        \includegraphics[width=\linewidth]{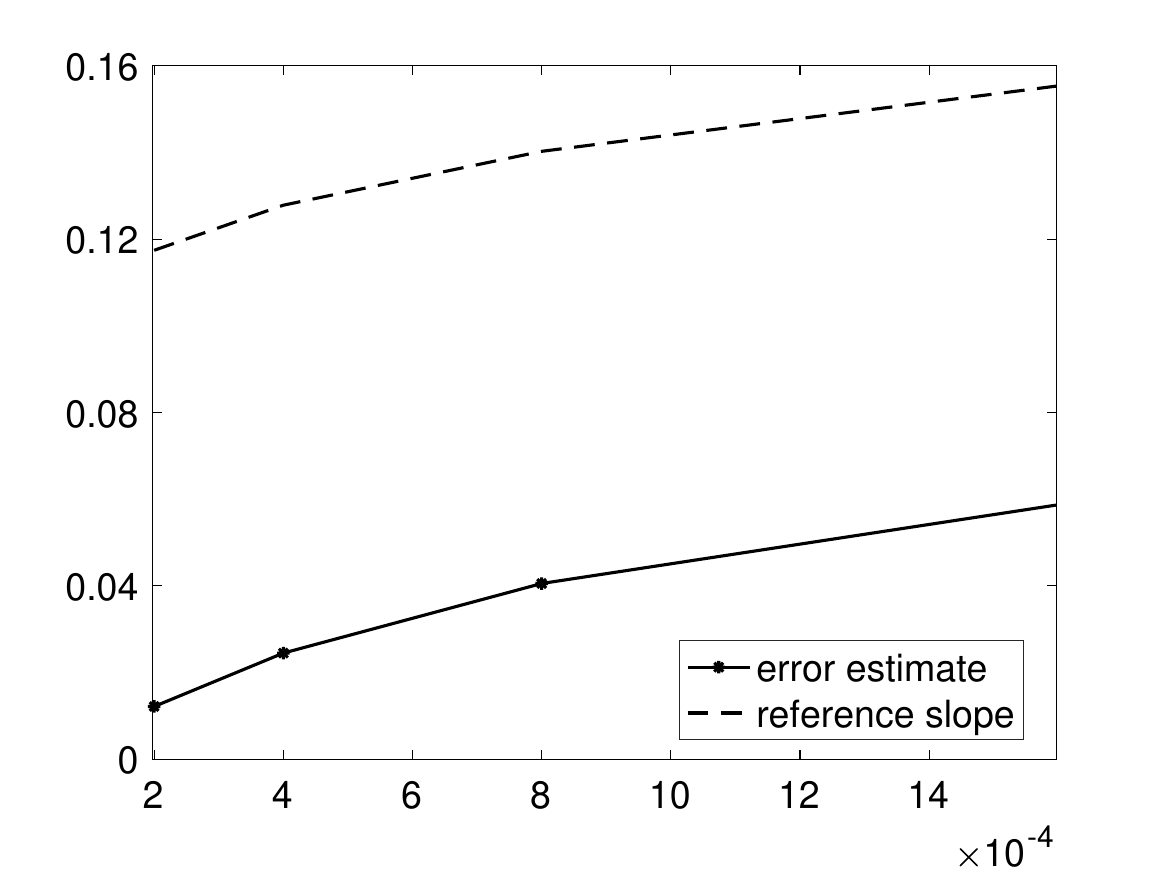}
        \caption{$\theta=0$, $p=3$, $\alpha=0.35$.}
        \label{spettacolo2}
    \end{subfigure}
    \caption{Error estimate computed according to \eqref{eq:rate2}, arising from the application of the $\theta$-method to Equation \eqref{eq:prtest}. The reference solution is \eqref{soluzione esplicita} and the displayed error decay results from the application of the $\theta$-method with step-sizes $2^kh_0$, $k=0,1,2,3$, with $h_0=10^{-4}$.}
    \label{spettacolo3}
\end{figure}

%

In the plots, we compute the difference between the trajectory (with step size $h$) and the reference trajectory given by \eqref{soluzione esplicita}.
This difference is evaluated at fixed initial data, and should be interpreted as a pointwise error between trajectories, rather than a norm-based error (such as the $L^1$-norm).
Nevertheless, this pointwise comparison is equivalent to evaluating the error in the $ L^\infty_t L^1_x $ norm, because when implementing a quadrature rule to approximate the spatial integral (i.e., the $L^1$ norm) we evaluate the difference between the two solutions at mesh points. Thus, the pointwise distance between trajectories at discrete times naturally corresponds to a discrete approximation of the desired norm.
Concerning the numerical trajectories, they exhibit a spiraling behavior: they move away from the origin when $\theta < 1/2$, and converge towards the origin when $\theta > 1/2$. This qualitative behavior is not consistent with the exact solution of the system. On the other hand, for $\theta=\frac12$ the trajectories are circles. 
\\
%
\\
We now consider problem \eqref{eq:ode} with velocity fields
\begin{equation}\label{eq:prtest-4}
b(x_1,x_2)=\left(
\sqrt{|\sin 2\pi x_2|},\,
\sqrt{|\sin 2\pi x_1|}\right)\in W^{1,q}_\mathrm{loc}(\R^2),\,\,\mbox{with } 1\leq q<2,
\end{equation}
and
\begin{equation}\label{eq:prtest-5}
b(x_1,x_2)=\left(
\log\left(\frac{1}{|x_2|}\right)^\frac1p |x_2|^{1-\frac{1}{2p}},
\log\left(\frac{1}{|x_1|}\right)^\frac1p |x_1|^{1-\frac{1}{2p}}\right)\in W^{1,p}_\mathrm{loc}(\R^2),
\end{equation}
for some $p>1$ to be chosen, and initial datum in a small neighborhood of the origin. Figure \ref{altro} displays the pattern of the error estimate and highlight its decay in comparison with the reference logarithmic slope. Our numerical procedure is as follows. 
\begin{enumerate}
\item We fix a time step $h$ and solve the system using the $\theta$-method.
\item We vary the step size and treat the numerical solution corresponding to the smallest step size, denoted by $h_0$, as our reference (exact) solution.
\end{enumerate}
In the plots, we compute the difference between the trajectory obtained with step size $h$ and the reference trajectory corresponding to $h_0$, confirming the feasibility of the error estimate in \eqref{eq:rate2}.

\begin{figure}[htbp]
    \centering
    \begin{subfigure}{0.45\textwidth}
        \includegraphics[width=\linewidth]{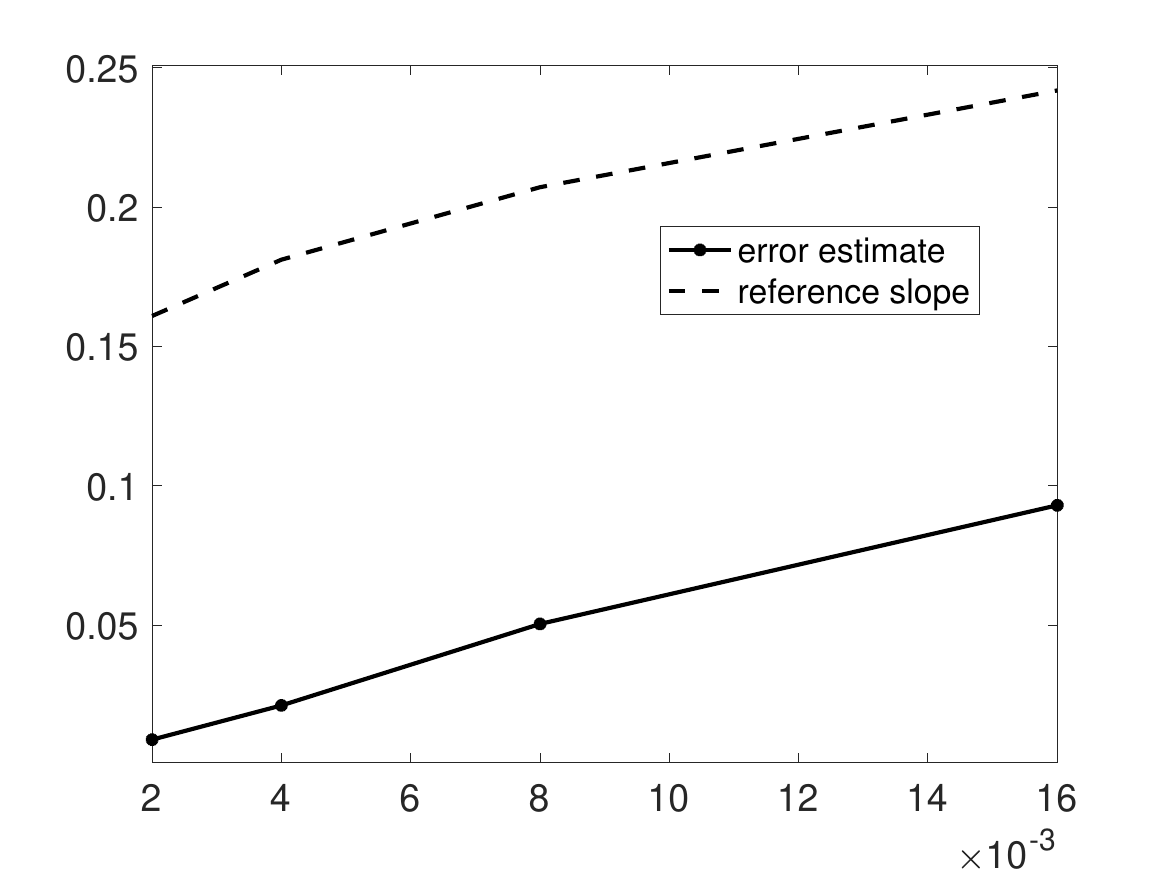}
        \caption{$\theta=0.8$, $h_0=10^{-3}$.}
        \label{altro1}
    \end{subfigure}
    \hfill
    \begin{subfigure}{0.45\textwidth}
        \includegraphics[width=\linewidth]{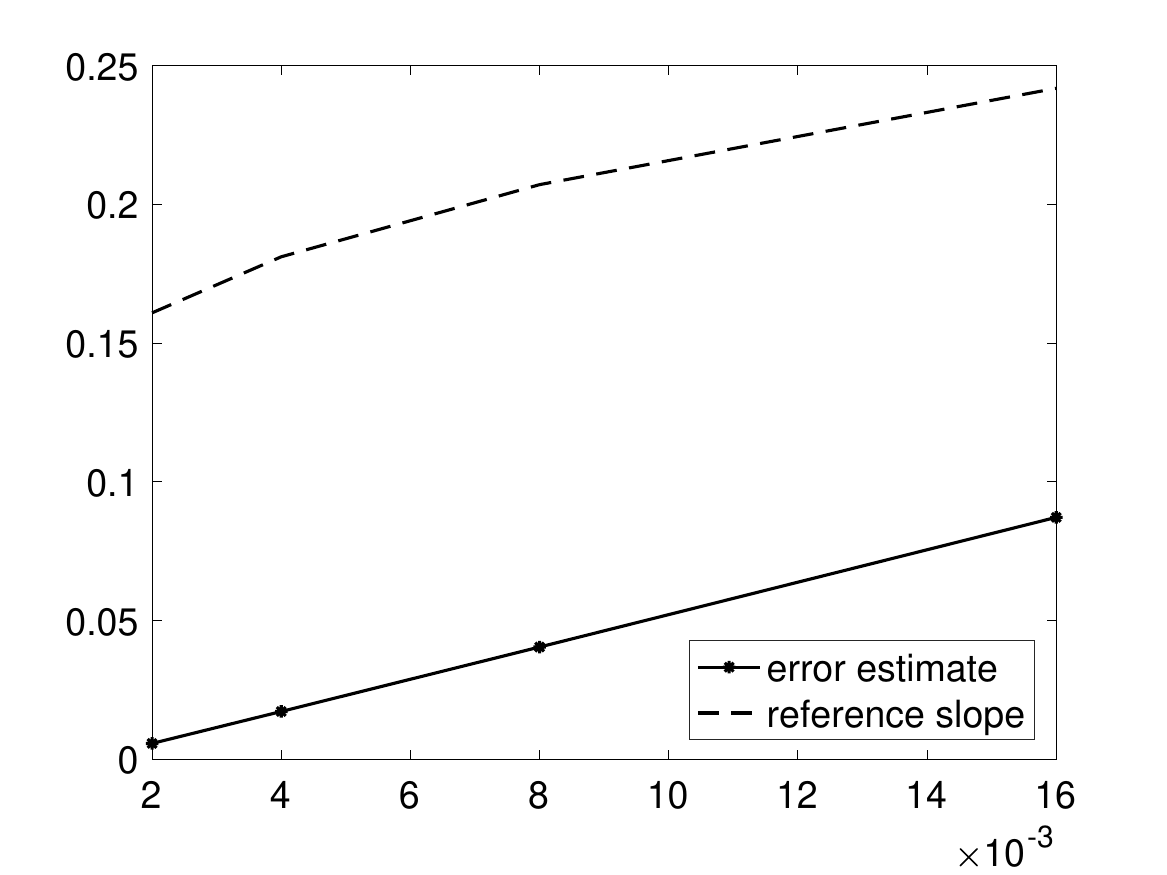}
        \caption{$\theta=0.7$, $p=100$, $h_0= 10^{-3}$.}
        \label{altro2}
    \end{subfigure}
    \caption{Error estimate computed according to \eqref{eq:rate2}, arising from the application of the $\theta$-method to Equation \eqref{eq:prtest-4} (on the left) and Equation \eqref{eq:prtest-5} (on the right). The reference solution is computed with step-size $h_0$ and the displayed error decay results from the application of the same method with step-sizes $2^kh_0$, $k=0,1,2,3$.}
    \label{altro}
\end{figure}


\section{Lagrangian solutions of the transport equation} \label{sec:te}
In this section we apply the Lagrangian theory developed in the previous sections to show that we can use the $\theta$-method to construct approximating sequences converging to the unique Lagrangian solution of \eqref{eq:te}.
In order to deal with the transport equation, we need to introduce the backward approximating flow.  
For every $t\in [0,T]$ we consider the regular Lagrangian flow $Y(t,\cdot,\cdot)$ of
\begin{equation}\label{eq:back}
\begin{cases}
\partial_s{Y}(t,s,x)=-b(t-s,{Y}(t,s,x)),\\
Y(t,0,x)=x,
\end{cases}
\end{equation}
so that, if $X$ is the regular Lagrangian flow of $b$, we have the relation $X^{-1}(t,\cdot)=Y(t,t,\cdot)$. With these notations, \eqref{eq:back} is a forward ODE which keeps track of the final time $t$. Then, we can write the $\theta$-method for \eqref{eq:back} as follows. Let $t\in [0,T]$, $\theta\in[0,1]$, and for any fixed $N\in\N$ define the time step $h\coloneqq T/N$ and consider the net $\{t_i\}_{i=0}^{N}$ defined by $t_i\coloneqq i\,h$, $i=0,\ldots,N$. For simplicity, we also define 
\begin{equation}\label{def:bar b}
\bar b_t(s,x)=\begin{cases}
-b(t-s,x),\qquad &\mbox{ if }s\leq t\\
0, &\mbox{ otherwise. }
\end{cases}
\end{equation}
Then, we proceed iteratively as follows.
Set $Y_{0}^{\theta,\e}(t,x)=x$. For $i\in \{0,\ldots,N-1\}$, given $Y_{i}^{\theta,\e}=Y_{i}^{\theta,\e}(t,x)$, we find $Y_{i+1}^{\theta,\e}=Y_{i+1}^{\theta,\e}(t,x)$ by solving 
\begin{equation}\label{eq:theta back}
\frac{Y_{i+1}^{\theta,\e}-Y_i^{\theta,\e}}{h}=(1-\theta) \fint_{t_i}^{t_{i+1}} \bar b_t^\e(\tau,Y_i^{\theta,\e})\de \tau+ \theta \fint_{t_i}^{t_{i+1}} \bar b_t^\e(\tau,Y_{i+1}^{\theta,\e})\de \tau,
\end{equation}
where we omit writing explicitly the dependence on $t$ and $x$ and where $\bar b_t^\e$ is an approximating sequence for $\bar b_t$ satisfying \eqref{eq:r1}, \eqref{eq:r2}, \eqref{eq:r3} and \eqref{eq:r4}. Notice that if $b_\e$ is an approximating sequence for $b$, then $\bar{b}_t ^\e$ is an an approximating sequence for $\bar{b}_t$, where $\bar{b}_t ^\e$ is defined in the obvious way.
Given the sequence $\{Y_i^{\theta,\e}\}_{i=0}^{N}$ constructed in \eqref{eq:theta back}, we define the function
\begin{equation}\label{eq:theta back 2}
\begin{aligned}
\tilde{Y}^{N,\theta,\e}(t,s,x)&\coloneqq \sum_{i=0}^{N-1}\left(Y_{i}^{\theta,\e}+\frac{s-t_i}{h}(Y_{i+1}^{\theta,\e}-Y_i^{\theta,\e})\right)\chi_i^t(s),
\end{aligned}
\end{equation}
where with $\chi_i^t$ we denote the characteristic function of the set $[0,t]\cap[t_i,t_{i+1}]$,.
Finally, we define the sequence $u^h$ of approximating solutions of \eqref{eq:te} as follows
\begin{equation}\label{def:sol-teN}
u^h(t,\cdot )=u_0(\tilde{Y}^{N,\theta,\e}(t,t,\cdot )),
\end{equation}
where to simplify the notations we drop the superscript $\theta,\e$ on the function $u^h$. Note that $u^h$ does not solve \eqref{eq:te}. However, since we have rephrased the problem as a forward ODE, one might expect from the stability analysis developed in Sections 3 and 4 that $u^h$ is a good candidate to approximate the unique Lagrangian solution of~\eqref{eq:te} (see \eqref{def:sol-te}). Indeed the following corollary holds, whose proof we omit since it easily follows from Theorem \ref{t:main1}.
\begin{cor}\label{cor:main}
Let $b\in L^1((0,T);W^{1,p}(\R^d))\cap L^\infty((0,T)\times\R^d)$ be a divergence-free velocity field for some $1<p<\infty$ and denote by $Y$ the unique regular Lagrangian flow solving \eqref{eq:back}. For $\e\geq 0$, let $b_{\e}$ be an approximating sequence of $b$, and set $h(\e)=\e^{2\beta}$. Let $N\in\N$ and $\{Y_i^{\theta,\e}\}_{i=0}^N$ be any solution of \eqref{eq:theta back} and $\tilde{Y}^{N,\theta,\e}(t,s,x)$ defined as in \eqref{eq:theta back 2}. Then, the same conclusions of Lemma \ref{lem:jac_num_irr} and Theorem \ref{t:main1} hold replacing $\tilde X ^{N, \theta, \e} , X_i ^{\theta, \e}$ and $X$ with $\tilde Y ^{N, \theta, \e} , Y_i ^{\theta, \e}$ and $Y$ respectively.
\end{cor}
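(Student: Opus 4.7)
The plan is to observe that Corollary \ref{cor:main} follows from a direct application of Lemma \ref{lem:jac_num_irr} and Theorem \ref{t:main1} to the backward vector field $\bar{b}_t$ defined in \eqref{def:bar b}, viewed as a time-dependent vector field in the variable $s \in [0,T]$ with $t$ treated as a fixed parameter. First I would verify that $\bar{b}_t$ satisfies the same structural assumptions as $b$: it is divergence-free because $\dive_x \bar{b}_t(s,x) = -\dive_x b(t-s,x) = 0$, it belongs to $L^\infty((0,T) \times \R^d)$ with $\|\bar{b}_t\|_{L^\infty_{s,x}} \leq \|b\|_{L^\infty_{t,x}}$, and it lies in $L^1((0,T); W^{1,p}(\R^d))$ with $\|\nabla \bar{b}_t\|_{L^1_s L^p_x} \leq \|\nabla b\|_{L^1_\tau L^p_x}$ by the change of variable $\tau = t - s$. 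Likewise, the sequence $\bar{b}_t^\e$ obtained from any approximating sequence $b_\e$ of $b$ via $\bar{b}_t^\e(s,x) = -b_\e(t-s,x)$ for $s \leq t$ (and zero otherwise) is an approximating sequence of $\bar{b}_t$ in the sense of Definition \ref{def:approx}, with constants $\bar{C}_0, \bar{C}_1, \bar{C}_2, \bar{C}_3, \alpha, \beta$ independent of $t \in [0,T]$.

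Next I would note that the iteration \eqref{eq:theta back} is literally the regularized $\theta$-method \eqref{eq:theta4} applied to the vector field $\bar{b}_t^\e$ (with running time variable $s$), and that the backward flow $Y(t, \cdot, \cdot)$ is the unique regular Lagrangian flow of $\bar{b}_t$ in the sense of Definition \ref{def:rlf}, so that the DiPerna-Lions theory guarantees its existence and uniqueness. Proposition \ref{prop:esistenza-p<d} then ensures solvability of \eqref{eq:theta back} under the choice $\e(h) = (2 \bar{C}_1 h)^{1/\beta}$, and Lemma \ref{lem:jac_num_irr} applied to $\bar{b}_t^\e$ yields the pointwise two-sided bounds on $\det \nabla_x Y_i^{\theta,\e}(t, \cdot)$ stated in \eqref{stima jacobiano} with $\e(h) = h^{1/(2\beta)}$, together with the $L^q$-change-of-variables estimate \eqref{eq:est1}.

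Finally, repeating verbatim the proof of Lemma \ref{lem:p>d} with $\bar{b}_t^\e$ in place of $b_\e$ and $Y_i^{\theta,\e}(t,\cdot)$ in place of $X_i^{\theta,\e}$ gives the analogous bound on the logarithmic functional
\begin{equation*}
\int_{B_r} \log\left(1 + \frac{|\tilde{Y}^{N,\theta,\e}(t,s,x) - Y(t,s,x)|}{\delta}\right) \de x,
\end{equation*}
and the super-level-set/threshold argument of the proof of Theorem \ref{t:main1} then delivers the logarithmic convergence rate for $\int_{B_r}|\tilde Y^{N,\theta,\e}(t,s,x) - Y(t,s,x)|\,\de x$. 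The only subtle point is the uniformity of all constants with respect to the parameter $t$, which is immediate since every norm of $\bar{b}_t$ and $\bar{b}_t^\e$ entering the estimates is controlled by the corresponding norm of $b$ and $b_\e$ independently of $t$. There is no real obstacle beyond careful bookkeeping: the backward construction is designed precisely to fit the same forward framework, so the proof reduces to verifying the hypotheses and invoking the already established results.
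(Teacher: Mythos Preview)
Your proposal is correct and follows exactly the approach the paper takes: the paper omits the proof entirely, stating only that it ``easily follows from Theorem \ref{t:main1}'' because the backward problem has been rephrased as a forward ODE, and it already notes (just before \eqref{eq:theta back 2}) that if $b_\e$ is an approximating sequence for $b$ then $\bar{b}_t^\e$ is one for $\bar{b}_t$. Your write-up simply spells out the verification of hypotheses and the uniformity in $t$ that the paper leaves implicit.
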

We now prove Theorem \ref{t:main2}. We point out that, as in the previous sections, in the following theorem we consider a spatial approximation of the velocity field also in the case $p>d$. This leaves unchanged the rate of convergence of the approximating flow $\tilde Y^{h,\theta,\e}$ and allows us to use the compressibility Lemma \ref{lem:jac_num_irr}.

\begin{proof}[Proof of Theorem \ref{t:main2}]
We divide the proof in several steps.\\
\\
\underline{Step 1} Initial datum in $C^1 _c (\mathbb{R}^d)$.\\
\\
Assuming that $u_0\in C^1_c(\R^d)$ such that $\mathrm{supp}\, u_0\subset B_R$, we define the set $A_\eta$ 
\begin{equation}\label{def:Aeta}
A_\eta\coloneqq \{x\in\R^d: |\tilde{Y}^{N,\theta,\e}(t,t,x)-Y(t,t,x)|>\eta \}.
\end{equation}
We want to apply the estimate on the superlevel sets \eqref{stima sopralivelli}. Let us consider $\delta(\e):=\max\left\{\e^\alpha, \e^{2\beta}\right\}$ and $\eta=\sqrt{\delta(\e)}$. Then, by rewriting $\e$ as a function of $h$, for any $r>0$ we have that
\begin{equation}
\sup_{t\in[0,T]}\mathscr{L}^d(B_r\cap A_\eta)\leq \frac{C\left(d,p,r,T,\|b\|_{L^\infty},\|\nabla b\|_{L^p}\right)}{|\log h|} .
\end{equation}

Thus, we use the definition of $u^h$ and $u^L$ to write
\begin{align*}
\|u^h(t,\cdot)-u(t,\cdot)\|_{L^1}&=\int_{\R^d}|u^h(t,x)-u(t,x)| \de x\\
&=\int_{\R^d}|u_0(\tilde{Y}^{N,\theta,\e}(t,t,x))-u_0(Y(t,t,x))| \de x\\
&\leq \mathrm{Lip}(u_0)\int_{B_{R}}|\tilde{Y}^{N,\theta,\e}(t,t,x)-Y(t,t,x)|\de x\\
&\leq \mathrm{Lip}(u_0)\int_{B_{R}\cap A_\eta}|\tilde{Y}^{N,\theta,\e}(t,t,x)-Y(t,t,x)|\de x\\
&+ \mathrm{Lip}(u_0)\int_{B_{R}\setminus A_\eta}|\tilde{Y}^{N,\theta,\e}(t,t,x)-Y(t,t,x)|\de x\\
&\leq \mathrm{Lip}(u_0)\left(\frac{C\left(d,p,R,T,\|b\|_{L^\infty},\|\nabla b\|_{L^p}\right)}{|\log h|}+\sqrt{h}\right),
\end{align*}
where in the last line we used the definition of $\eta$.\\
\\
\underline{Step 2} Density argument in $L^1$.\\
\\
Let $\{u_0^\delta\}_{\delta>0}$ be any Lipschitz approximation of $u_0$ with compact support. 
Thus, we use the definition of $u^h$ and $u^L$ to write
\begin{align*}
\|u^h(t,\cdot)-u(t,\cdot)\|_{L^1}&=\int_{\R^d}|u^h(t,x)-u(t,x)| \de x\\
&=\int_{\R^d}|u_0(\tilde{Y}^{N,\theta,\e}(t,t,x))-u_0(Y(t,t,x))| \de x\\
&\leq \int_{\R^d}|u_0(\tilde{Y}^{N,\theta,\e}(t,t,x))-u_0^\delta(\tilde{Y}^{N,\theta,\e}(t,t,x))| \de x\\
&+\int_{\R^d}|u_0^\delta(\tilde{Y}^{N,\theta,\e}(t,t,x))-u_0^\delta(Y(t,t,x))| \de x\\
&+\int_{\R^d}|u_0(Y(t,t,x))-u_0^\delta(Y(t,t,x))| \de x.
\end{align*}
To provide an estimate on the first term we use Lemma \ref{lem:jac_num_irr}  to show that
\begin{equation}\label{dove non funziona}
\int_{\R^d}|u_0(\tilde{Y}^{N,\theta,\e}(t,t,x))-u_0^\delta(\tilde{Y}^{N,\theta,\e}(t,t,x))| \de x\leq C \|u_0^\delta-u_0\|_{L^1}.
\end{equation}
On the other hand, for the last term we use the definition of regular Lagrangian flow to compute
\begin{equation}
\int_{\R^d}|u_0^\delta(Y(t,t,x))-u_0(Y(t,t,x))| \de x\leq \|u_0^\delta-u_0\|_{L^1}.
\end{equation}
Finally, we use Step 1 to estimate the middle term
\begin{align*}
\int_{\R^d}\hspace{-0.2cm}|u_0^\delta(\tilde{Y}^{N,\theta,\e}(t,t,x))-u_0^\delta(Y(t,t,x))| \de x&=\int_{B_{R_\delta}}\hspace{-0.2cm}|u_0^\delta(\tilde{Y}^{N,\theta,\e}(t,t,x))-u_0^\delta(Y(t,t,x))| \de x\\
&\leq \mathrm{Lip}(u_0^\delta)\,\frac{C\left(d,p,R_\delta,T,\|b\|_{L^\infty},\|\nabla b\|_{L^p}\right)}{|\log h|}.
\end{align*}
Obviously, the constant above explodes as $\delta\to 0$ but $h$ and $\delta$ are not related to each other. Thus, we have obtained that
\begin{align*}
\|u^h(t,\cdot)-u(t,\cdot)\|_{L^1}&\lesssim \frac{C_\delta}{|\log h|}+ \|u_0^\delta-u_0\|_{L^1},
\end{align*}
where the implicit constant depends on $d,p,T,\|b\|_{L^\infty},\|\nabla b\|_{L^p}, \bar C_1, \bar C_2$. The convergence follows by choosing $\delta$ small enough and then $h$ small enough such that
$$
\|u_0^\delta-u_0\|_{L^1}\leq\gamma,\qquad \frac{C_\delta}{|\log h|}\leq \gamma,
$$
for any fixed $\gamma>0$.
\end{proof}

\begin{rem}
If $b\in L^1((0,T);W^{1,p}(\R^d))$ and $p<\infty$, for $u_0\in L^1(\R^d)$ distributional solutions of \eqref{eq:te} are not unique. Thus, Theorem \ref{t:main2} provides a selection principle in the sense that it rules out the convergence of the sequence $u^h$ towards a distributional solution different than the Lagrangian one.
\end{rem}

\appendix
\section{An example of a compatible space discretization} \label{sec:discr-space}
We now provide a nontrivial example of a numerical regularization in the space variable satisfying the hypotheses \eqref{eq:r1}-\eqref{eq:r4}. We remark that the classical convolution with a smooth kernel satisfies the aforementioned assumptions. 
The following construction is based on the {\em vortex-blob method} used in the theory of weak solutions of the 2D Euler equations, see \cite{Be, CCS3, DPM}. We recall that the vortex-blob method is the prototype of several numerical methods that are commonly used in the analysis of the Euler equations and it is based on the idea of approximating the vorticity with a finite number of cores which evolve according to the velocity of the fluid. We point out that the example provided in this section can be easily generalized to non-Cartesian meshes. Indeed, we decided to use the lattice $\Lambda_\ell$ below just to simplify the exposition.\\
\\
Let $\e\in (0,1)$ be a small parameter that will be chosen later. We consider the lattice
$$
\Lambda_\ell\coloneqq \{\alpha_i\in \ell\Z\times \ell\Z:\alpha_i=\ell(i_1,i_2), \mbox{ where }i_1,i_2\in\Z  \},
$$
and define $Q_i$ the square with sides of length $\ell$ parallel to the coordinate axis and centered at $\alpha_i\in \Lambda_\ell$. Let $\varphi_\e$ be a standard smooth mollifier. Assume for simplicity that $b\in W^{1,p}(\R^2;\R^2)$ for some $p>2$ is a divergence-free velocity field with compact support. Then, we define the vorticity of $b$:
$$
\omega\coloneqq \curl b=\partial_{x_2}b_1-\partial_{x_1}b_2,\qquad \mbox{and }\qquad
K(x)\coloneqq  \frac{x^\perp}{|x|^2}.
$$
From now on, since the parameter $\ell$ will be chosen dependent on $\e$, we will use only the superscript/subscript $\e$.
Thus, we define the quantities
\begin{equation}
    \Gamma_i^\e\coloneqq \int_{Q_i}\omega(x)\de x,
\end{equation}
and the function
\begin{equation}\label{def:app-num1}
\omega_\e(x)\coloneqq \sum_{i=1}^{N(\e)}\Gamma_i^\e \,\varphi_\e(x-\alpha_i).
\end{equation}
Before defining the approximating velocity field, we first prove the following lemma.
\begin{lem}\label{lem:vort}
Let $\omega\in L^1\cap L^p(\R^2)$ for some $p>2$ and let $\omega_\e$ be defined as in~\eqref{def:app-num1}. Let $\ell=\ell(\e):=\e^4$.
Then, the estimate
\begin{equation}\label{es:1}
\| \omega_\e-\varphi_\e*\omega\|_{L^p} \leq C \|\omega\|_{L^1}\e^{1+\frac{2}{p}}
\end{equation}
holds for all $1\leq p\leq \infty$, where $C>0$ is a positive constant which does not depend on~$\e$. Moreover, the following bound holds
\begin{equation}
\|\omega_\e\|_{L^p}\leq C\|\omega\|_{L^1}+\|\omega\|_{L^p},
\end{equation}
for some positive constant $C>0$.
\end{lem}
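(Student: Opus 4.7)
The plan is to represent the error $\omega_\e-\varphi_\e*\omega$ cube-by-cube, apply the mean value theorem to the mollifier, and establish the two endpoint estimates at $p=1$ and $p=\infty$, from which the general $L^p$ bound will follow by interpolation. Since the cubes $\{Q_i\}$ tile $\R^2$ and $\Gamma_i^\e=\int_{Q_i}\omega$, both sides admit a cube-by-cube representation:
$$
(\varphi_\e*\omega)(x)=\sum_i\int_{Q_i}\varphi_\e(x-y)\omega(y)\de y,\qquad \omega_\e(x)=\sum_i\int_{Q_i}\varphi_\e(x-\alpha_i)\omega(y)\de y,
$$
so that
$$
\omega_\e(x)-(\varphi_\e*\omega)(x)=\sum_i\int_{Q_i}[\varphi_\e(x-\alpha_i)-\varphi_\e(x-y)]\,\omega(y)\de y.
$$
The fundamental theorem of calculus, combined with the bound $|y-\alpha_i|\lesssim\ell$ valid for $y\in Q_i$, yields
$$
|\varphi_\e(x-\alpha_i)-\varphi_\e(x-y)|\leq \ell\int_0^1|\nabla\varphi_\e(x-\alpha_i-s(y-\alpha_i))|\de s.
$$

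For the $L^1$ endpoint I would integrate this pointwise estimate in $x$, swap the order of integration, translate to reduce the inner integral to $\|\nabla\varphi_\e\|_{L^1}\leq C/\e$, and exploit the disjointness of the $Q_i$ to collapse $\sum_i\int_{Q_i}|\omega|$ into $\|\omega\|_{L^1}$. This yields
$$
\|\omega_\e-\varphi_\e*\omega\|_{L^1}\leq C\,\frac{\ell}{\e}\,\|\omega\|_{L^1}=C\e^3\|\omega\|_{L^1}
$$
once $\ell=\e^4$ is substituted. For the $L^\infty$ endpoint I would instead extract $\|\nabla\varphi_\e\|_{L^\infty}\leq C/\e^3$ pointwise and again rely on the disjointness of the $Q_i$ to sum the $L^1$ masses, obtaining
$$
\|\omega_\e-\varphi_\e*\omega\|_{L^\infty}\leq C\,\frac{\ell}{\e^3}\,\|\omega\|_{L^1}=C\e\|\omega\|_{L^1}.
$$
Interpolation via $\|f\|_{L^p}\leq\|f\|_{L^1}^{1/p}\|f\|_{L^\infty}^{1-1/p}$ then immediately delivers $\|\omega_\e-\varphi_\e*\omega\|_{L^p}\leq C\|\omega\|_{L^1}\e^{1+2/p}$, which is exactly \eqref{es:1}.

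The second bound is then obtained by writing $\omega_\e=(\omega_\e-\varphi_\e*\omega)+\varphi_\e*\omega$: the first summand is controlled in $L^p$ by $C\|\omega\|_{L^1}$ (by \eqref{es:1} combined with $\e\in(0,1)$), while the second is controlled by Young's convolution inequality, $\|\varphi_\e*\omega\|_{L^p}\leq\|\omega\|_{L^p}$.

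The one delicate step I expect to require some care is the $L^\infty$ endpoint: for a fixed $x$ there are roughly $(\e/\ell)^2$ cubes $Q_i$ with $|\alpha_i-x|\lesssim\e$, so a naive counting would pay a potentially large combinatorial factor. The point is that one pulls the pointwise estimate on $|\nabla\varphi_\e|$ out of the integral first, so that the remaining $i$-sum measures only the total $L^1$ mass of $\omega$ on the union of the contributing cubes, which is automatically bounded by $\|\omega\|_{L^1}$ independently of how many cubes participate.
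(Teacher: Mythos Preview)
Your proposal is correct and follows essentially the same route as the paper: the same cube-by-cube representation of the error, the same mean-value estimate on $\varphi_\e(x-\alpha_i)-\varphi_\e(x-y)$ via $|y-\alpha_i|\lesssim\ell$, the same $L^1$ and $L^\infty$ endpoints (using $\|\nabla\varphi_\e\|_{L^1}\lesssim\e^{-1}$ and $\|\nabla\varphi_\e\|_{L^\infty}\lesssim\e^{-3}$ respectively), the same interpolation, and the same triangle-inequality plus Young argument for the $L^p$ bound on $\omega_\e$.
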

\begin{proof}
We start by proving the inequality \eqref{es:1} in the case $p=1$. By using the definition of $\omega_\ell$ we have that
\begin{align}
\int_{\R^2} &\left|\omega_\e(x)-\varphi_\e*\omega(x)\right|\de x = \int_{\R^2}\left|
\displaystyle\sum_i \Gamma^\e_i \varphi_\e(x-\alpha_i)-\int_{\R^2} \varphi_\e(x-z)\omega(z) dz
\right| \de x \nonumber \\
&=\int_{\R^2} \left|
\displaystyle\sum_i \left(\int_{Q_i}\omega(y)\de y \right)\,\varphi_\e(x-\alpha_i)-\sum_i \int_{Q_i} \varphi_\e(x-y)\omega(y) \de y \right|\de x \nonumber \\
&=\int_{\R^2} \left|
\displaystyle\sum_i \int_{Q_i} \omega(y)\left[\varphi_\e(x-\alpha_i)-\varphi_\e(x-y)\right] \de y \right| \de x \nonumber \\
&\leq \int_{\R^2} \sum_i \int_{Q_i} \left| \omega(y)\right| |\underbrace{\varphi_\e(x-\alpha_i)-\varphi_\e(x-y)}_{(*)}| \de y \de x \label{resc}.
\end{align}
For the term $(\ast)$ we have the following estimate
\begin{equation*}
\varphi_\e(x-\alpha_i)-\varphi_\e(x-y)= \int_0^1 \nabla\varphi_\e(x-s\alpha_i-(1-s)y) \de s\cdot \left(y-\alpha_i\right),
\end{equation*}
and since $y,\alpha_i\in Q_i$, we have that $|y-\alpha_i|\leq \sqrt{2}\ell$.
Then, integrating in the $x$ variable in \eqref{resc} we get
$$
\|\omega_\e-\varphi_\e*\omega\|_{L^1} \leq \frac{\ell}{\e}\, \|\nabla\varphi\|_{L^1}\|\omega\|_{L^1}.
$$
Choosing the function $\ell(\e)=\e^4$, we get \eqref{es:1} for $p=1$.\\
For $p=\infty$ we can argue in a similar way; by the same computations as in \eqref{resc} we have that, since $||\nabla \phi_\e||_\infty \leq \e  ^{-3} ||\nabla \phi||_\infty$:
\begin{equation}
|\omega_\e(x)-\varphi_\e*\omega(x)|\leq \frac{\ell}{\e^3}\|\nabla\varphi\|_\infty\|\omega\|_{L^1}.
\end{equation}
Finally, by interpolating we have
$$
\|\omega_\e-\varphi_\e*\omega\|_{L^p}\leq \|\omega_\e-\varphi_\e*\omega\|_{L^1}^{\frac{1}{p}}\|\omega_\e-\varphi_\e*\omega\|_{L^\infty }^{1-\frac{1}{p}}\leq C\|\omega\|_{L^1}\e^{1+\frac{2}{p}},
$$
and this concludes the proof of \eqref{es:1}. The $L^p$ bound on $\omega_\e$ follows from 
$$
\|\omega_\e\|_{L^p}\leq \|\omega_\e-\varphi_\e*\omega\|_{L^p}+\|\varphi_\e*\omega\|_{L^p}\leq C \|\omega\|_{L^1}+\|\omega\|_{L^p},
$$
and this conclude the proof.
\end{proof}
We can now define the approximating velocity field. Denoting $K_\e\coloneqq K*\varphi_\e$ and assuming $\ell(\e)=\e^4$, we consider:

\begin{equation}
\label{def:appr-num2}
b_\e(x)=\sum_{i=1}^{N(\e)}\Gamma_i^\e \,K_\e(x-\alpha_i)= \sum_{i=1}^{N(\e)}\Gamma_i^\e \,\int_{\R^2} K(x)  \varphi_\e (x- \alpha_i) \de x = K * \omega_\e (x) .
\end{equation}

Notice that the sum in \eqref{def:appr-num2} is finite because of the assumption on the support of $b$. We now verify that \eqref{def:appr-num2} satisfies the assumptions \eqref{eq:r1}-\eqref{eq:r4}.
\begin{lem}
The velocity field $b_\e$ defined in \eqref{def:appr-num2} is smooth and divergence-free. Moreover, if $\ell(\e)=\e^4$, there exists a constant $C>0$ such that the sequence $b_\e$ satisfies the bounds \eqref{eq:r1}-\eqref{eq:r4} with $\alpha=1$ and $\beta=3$.
\end{lem}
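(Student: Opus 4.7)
The plan is to verify smoothness, the divergence-free property, and the four quantitative bounds \eqref{eq:r1}--\eqref{eq:r4} in turn, systematically exploiting the Biot--Savart representation $b_\e = K*\omega_\e$ together with the uniform $L^p$ control on $\omega_\e$ furnished by Lemma~\ref{lem:vort}. Smoothness is immediate because $\omega_\e$ is a finite linear combination of translates of the smooth mollifier $\varphi_\e$, and convolution with the locally integrable kernel $K$ preserves regularity. For the divergence-free property I would write $K = \nabla^\perp G$ where $G(x) = \frac{1}{2\pi}\log|x|$ is the fundamental solution of $-\Delta$ in $\R^2$, so that $b_\e = \nabla^\perp (G*\omega_\e)$, giving $\dive b_\e \equiv 0$ and in particular \eqref{eq:r3} with $\bar C_2 = 0$.

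For the uniform bound \eqref{eq:r4}, I would invoke the standard Biot--Savart estimate $\|K*f\|_{L^\infty} \lesssim \|f\|_{L^1} + \|f\|_{L^p}$, valid for compactly supported scalars $f$ and any $p>2$ (splitting $K$ into its $L^1_{\mathrm{loc}}$ singular part and an $L^{p'}$ tail with $p'<2$), applied to $f = \omega_\e$. Lemma~\ref{lem:vort} together with the trivial bound $\|\omega_\e\|_{L^1}\le \|\omega\|_{L^1}$ then yields $\|b_\e\|_{L^\infty} \le \bar C_3$ uniformly in $\e$. To prove \eqref{eq:r1}, I write $b_\e - b = K*(\omega_\e - \omega)$ and decompose $\omega_\e - \omega = (\omega_\e - \varphi_\e*\omega) + (\varphi_\e*\omega - \omega)$: the first piece is $O(\e^{1+2/p})$ in $L^p$ by Lemma~\ref{lem:vort}, and the identity $\varphi_\e*b = K*(\varphi_\e*\omega)$ recasts the second as the classical mollification error $\|\varphi_\e*b - b\|_{L^p}\lesssim \e\|\nabla b\|_{L^p}$, finite because $b \in W^{1,p}$. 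Since all functions under consideration are supported in a fixed ball $B_R$ for $\e \le 1$, H\"older's inequality converts the $L^p$ rate $\e$ into the $L^1$ rate $\e$, so that $\alpha = 1$ is admissible.

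The main technical step, and the one I expect to be the main obstacle, is the $L^\infty$ gradient bound \eqref{eq:r2}. To sidestep singular integrals, I would move the derivative onto the mollifier and study $\nabla K_\e(z) = \int K(z-y)\,\nabla\varphi_\e(y)\,\de y$ directly. Using $|K(z)|\le C/|z|$ together with $\|\nabla \varphi_\e\|_{L^\infty}\lesssim \e^{-3}$ and $\mathrm{supp}\,\nabla\varphi_\e \subset B_\e$, a short calculation yields $\|\nabla K_\e\|_{L^\infty}\lesssim \e^{-2}$ with the tail decay $|\nabla K_\e(z)|\lesssim (\e|z|)^{-1}$ for $|z|\ge 2\e$. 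I then control $|\nabla b_\e(x)|\le \sum_i |\Gamma_i^\e|\,|\nabla K_\e(x-\alpha_i)|$ by splitting into near-field ($|x-\alpha_i|\le 2\e$) and far-field contributions: the near-field is bounded by $\e^{-2}\int_{B_{3\e}(x)}|\omega|$, while the far-field, thanks to the scaling $\ell = \e^4\ll \e$, is well approximated by $\e^{-1}\int_{B_R}|\omega(y)|/|x-y|\,\de y$, itself controlled via H\"older by $\e^{-1}\|\omega\|_{L^p}\,\big\||y|^{-1}\big\|_{L^{p'}(B_R)}$, finite because $p>2$ implies $p'<2$. Altogether $\|\nabla b_\e\|_{L^\infty}\lesssim \e^{-2}$, which certainly implies the looser bound \eqref{eq:r2} with $\beta = 3$.

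The delicate point throughout this last step is the passage between the discrete sum over the lattice $\Lambda_\ell$ and the corresponding continuous integral against $|\omega|$: this is precisely what the quartic scaling $\ell = \e^4$ guarantees, since each $\e$-ball then contains of order $\e^{-6}$ cells $Q_i$, and the Riemann-sum error is absorbed by the same $\e$-power bookkeeping already used in the proof of Lemma~\ref{lem:vort}.
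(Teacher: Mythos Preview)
Your proposal is correct and, for the smoothness/divergence-free part and for the bounds \eqref{eq:r1}, \eqref{eq:r3}, \eqref{eq:r4}, follows essentially the same line as the paper: same decomposition $b_\e - b = K*(\omega_\e - \varphi_\e*\omega) + (\varphi_\e*b - b)$, same $K = K_1 + K_2$ splitting with Young's inequality, and the same appeal to Lemma~\ref{lem:vort} for the uniform $L^p$ control on $\omega_\e$.

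The one genuine difference is the gradient bound \eqref{eq:r2}. The paper's argument is a one-liner: writing $\nabla K_\e = K*\nabla\varphi_\e$ and applying the same $K_1/K_2$ splitting with Young gives
\[
\|\nabla K_\e\|_{L^\infty}\le \|K_1\|_{L^1}\|\nabla\varphi_\e\|_{L^\infty}+\|K_2\|_{L^\infty}\|\nabla\varphi_\e\|_{L^1}\lesssim \e^{-3}+\e^{-1}\lesssim \e^{-3},
\]
and then $\|\nabla b_\e\|_{L^\infty}\le \sum_i|\Gamma_i^\e|\,\|\nabla K_\e\|_{L^\infty}\le \|\omega\|_{L^1}\,\e^{-3}$, done. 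Your near-field/far-field decomposition and the Riemann-sum comparison with $\int|\omega(y)|/|x-y|\,\de y$ are correct and yield the sharper bound $\|\nabla b_\e\|_{L^\infty}\lesssim \e^{-2}$, but they are not needed for $\beta=3$; in fact, once you have $\|\nabla K_\e\|_{L^\infty}\lesssim \e^{-2}$ (which you do establish), the crude estimate $\sum_i|\Gamma_i^\e|\le\|\omega\|_{L^1}$ already finishes the job without any lattice bookkeeping. So the ``delicate point'' you highlight simply does not arise in the paper's approach, and your worry about the discrete-to-continuous passage is unnecessary for the stated lemma.

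One minor caveat: your conversion of the $L^p$ rate to an $L^1$ rate for \eqref{eq:r1} via compact support is not quite clean, since $b_\e = K*\omega_\e$ is not compactly supported even though $b$ is. The paper proves the $L^p$ bound and notes separately (in the remark following Lemma~\ref{lem:p>d}) that the $L^1_tL^p_x$ version of \eqref{eq:r1} suffices for the main theorem, so this is not a real obstacle.
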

\begin{proof}
First of all, the divergence-free condition and the smoothness of $b_\e$ follows by its definition. Thus, condition \eqref{eq:r3} is satisfied. Moreover, we recall that we can split $K\coloneqq K_1+K_2$ where $K_1\in L^q(\R^2)$ for any $1\leq q< 2$, while $K_2\in L^q(\R^2)$ for any $2<q\leq \infty$. We start by proving \eqref{eq:r1}: we have that
\begin{align*}
    \|b_\e-b\|_{L^p}&\leq  \|b_\e-\varphi_\e*b\|_{L^p}+ \|\varphi_\e*b-b\|_{L^1}\\
    &\leq \|K*\omega_\e-\varphi_\e*K*\omega\|_{L^p}+\e\|\nabla b\|_{L^p}\\
    &= \|K*(\omega_\e-\varphi_\e*\omega)\|_{L^p}+\e\|\nabla b\|_{L^p}\\
    &\leq \|K_1*(\omega_\e-\varphi_\e*\omega)\|_{L^p}+\|K_2*(\omega_\e-\varphi_\e*\omega)\|_{L^p}+\e\|\nabla b\|_{L^p}\\
    &\leq \|K_1\|_{L^1}\|\omega_\e-\varphi_\e*\omega\|_{L^p}+\|K_2\|_{L^p}\|\omega_\e-\varphi_\e*\omega\|_{L^1}+\e\|\nabla b\|_{L^p}\\
    &\leq C (\e^{1+\frac2p}+\e^{3}+\e) \lesssim \e,
\end{align*}
where we used Lemma \ref{lem:vort} in the last line, and we passed from a $L^1$ estimate in the first line to a $L^p$ estimate in the second one by the compact support of $b$. Thus, the conclusion follows.
We now prove \eqref{eq:r2}: we use  
\begin{align*}
\|\nabla b_\e\|_{L^\infty}&\leq\sum_{i=1}^{N(\e)}|\Gamma_i^\e|( \|K_1*(\nabla\varphi_\e)\|_{L^\infty}+\|K_2*(\nabla\varphi_\e)\|_{L^\infty})\\
&\leq ||\omega||_{L^1} ( \|K_1*(\nabla\varphi_\e)\|_{L^\infty}+\|K_2*(\nabla\varphi_\e)\|_{L^\infty}) \\&\leq ||\omega||_{L^1} ( ||K_1||_{L^1} \; ||\nabla\varphi_\e ||_{L^\infty} +||K_2||_{L^\infty} ||\nabla\varphi_\e||_{L^1}) \leq C\frac{\|\omega\|_{L^1}}{\e^3},
\end{align*}
where the constant $C$ depends on $\|K_1\|_{L^1},\|K_2\|_\infty,\|\nabla \varphi\|_{L^1},$ and $\|\nabla\varphi\|_\infty$. Lastly, we prove \eqref{eq:r4}: we split $K$ as above and we observe that
\begin{align*}
    \|b_\e\|_\infty&\leq \sum_{i=1}^{N(\e)}|\Gamma_i^\e|\|K*\omega_\e\|_\infty\leq \|\omega\|_{L^1}\left(\|K_1*\omega_\e\|_\infty+\|K_2*\omega_\e\|_\infty\right)\\
    &\leq \|\omega\|_{L^1}\left(\|K_1\|_{L^q}\|\omega_\e\|_{L^p}+\|K_2\|_{L^\infty}\|\omega_\e\|_{L^1}\right)\\
    &\leq C\|\omega\|_{L^1}(\|\omega\|_{L^1}+\|\omega\|_{L^p}),
\end{align*}
where we have used that $p>2$ when we apply Young's inequality in the estimate for the term $K_1*\omega_\e$ in order to choose $q <2$, together with \eqref{es:1}. The proof is complete.
\end{proof}

\section*{Acknowledgments}
G. Crippa is supported by SNF Project 212573 FLUTURA – Fluids, Turbulence, Advection and by the SPP 2410 ``Hyperbolic Balance Laws in Fluid Mechanics: Complexity, Scales, Randomness (CoScaRa)'' funded by the Deutsche Forschungsgemeinschaft (DFG, German Research Foundation) through the project 200021E\_217527 funded by the Swiss National Science Foundation. 
G. Ciampa is supported by European Union - NextGenerationEU - National Recovery and Resilience Plan (Piano Nazionale di Ripresa e Resilienza, PNRR) - National Centre for HPC, Big Data and Quantum Computing (CN\_00000013 - CUP: E13C22001000006). 
The work of G. Ciampa and S. Spirito is partially supported by INdAM-GNAMPA through the project ``Stabilit\`a e confronto per equazioni di trasporto/diffusione e applicazioni a EDP non lineari'', and by the projects PRIN 2020 ``Nonlinear evolution PDEs, fluid dynamics and transport equations: theoretical foundations and applications”, and PRIN2022 ``Classical equations of compressible fluid mechanics: existence and properties of non-classical solutions''. T. Cortopassi is a member of the INdAM group GNAMPA. R. D'Ambrosio is a member of the INdAM group GNCS. The work of R. D'Ambrosio is partially supported by GNCS-INdAM project and by PRIN-PNRR project P20228C2PP ``BAT-MEN (BATtery Modeling, Experiments 
\& Numerics) - Enhancing battery lifetime: mathematical modeling, numerical simulations and AI parameter estimation techniques for description and control of material localization processes''. The authors thank Michele Benzi and Jan Giesselmann for valuable discussions and feedback on an earlier version of this paper.


\begin{thebibliography}{10}

\bibitem{ACM}
\textsc{G. Alberti, G. Crippa, A. L. Mazzucato}: \emph{Exponential self-similar mixing by incompressible flows}. J. Amer. Math. Soc. $\mathbf{32}$, 445-490 (2019).

\bibitem{Am} 
\textsc{L. Ambrosio}: \emph{Transport equation and Cauchy problem for BV vector fields.} Invent. Math. $\mathbf{158}$, 227-260 (2004).

\bibitem{ABDL}
\textsc{L. Ambrosio, F. Bouchut, C. De Lellis}: \emph{Well-posedness for a class of hyperbolic systems of conservation laws in several space dimensions}. Comm. Partial Differ. Equ. $\mathbf{29}$ no. 9-10, 1635-1651 (2004). 

\bibitem{Be} 
\textsc{J. T. Beale}: { \em The approximation of weak solutions to the 2-D Euler equations by vortex elements}. In: Glimm, J., Majda, A.J. (eds) Multidimensional Hyperbolic Problems and Computations. The IMA Volumes in Mathematics and Its Applications $\mathbf{29}$. Springer, New York, NY (1991).

\bibitem{BelgacemJabin2019}
\textsc{F. Ben Belgacem,  P.-E. Jabin}: \emph{Convergence of numerical approximations to nonlinear continuity equations with rough force fields}. Arch. Rational Mech. Anal. $\mathbf{234}$ (2), 509-547 (2019).

\bibitem{BCC} \textsc{P. Bonicatto, G. Ciampa, G. Crippa}: \emph{On the advection-diffusion equation with rough coefficients: weak solutions and vanishing viscosity}. J. Math. Pures Appl. $\mathbf{167}$, 204-224 (2022).

\bibitem{BC} \textsc{F. Bouchut, G. Crippa}: \emph{Lagrangian flows for vector fields with gradient given by a singular integral}. J. Hyperbolic Diff. Equ. $\mathbf{10}$, 235-282  (2013).

\bibitem{Boyer2012}
\textsc{F. Boyer}: \emph{Analysis of the upwind finite volume method for general initial- and boundary-value transport problems}. IMA J. Numer. Anal. $\mathbf{32}$ (4), 1404-1439 (2012).

\bibitem{BCDL}
\textsc{E. Bru\`e, M. Colombo, C. De Lellis}: \emph{Positive solutions of transport equations and classical nonuniqueness of characteristic curves}. Arch. Rational Mech. Anal. $\mathbf{240}$, 1055-1090 (2021).

\bibitem{BCK}
\textsc{E. Bru\`e, M. Colombo, A. Kumar}: \emph{Sharp Nonuniqueness in the Transport Equation with Sobolev Velocity Field}. \url{https://arxiv.org/abs/2405.01670}

\bibitem{CC}
\textsc{L. Caravenna, G. Crippa}: \emph{A directional Lipschitz extension lemma, with applications to uniqueness and Lagrangianity for the continuity equation.} Commun. Partial Differ. Equ. $\mathbf{46}$(8), 1488-1520 (2021).

\bibitem{CL21} 
\textsc{A. Cheskidov, X. Luo}: \emph{Nonuniqueness of weak solutions for the transport equation at critical space regularity}. Annals of PDE $\mathbf{7}$, 2 (2021).

\bibitem{CL22} 
\textsc{A. Cheskidov, X. Luo}: \emph{Extreme temporal intermittency in the linear Sobolev transport: almost smooth nonunique solutions}. Analysis and PDE $\mathbf{17}$(6), 2161-2177 (2024).

\bibitem{CCS2}
\textsc{G. Ciampa, G. Crippa, S. Spirito}: \emph{Smooth approximation is not a selection principle for the transport equation with rough vector field}. Calc. Var. $\mathbf{59}$, 13 (2020).



\bibitem{CCS3}
\textsc{G. Ciampa, G. Crippa, S. Spirito}: {\em Weak solutions obtained by the vortex method for the 2D Euler equations are Lagrangian and conserve the energy}. J. Nonlinear Sci. $\mathbf{30}$, 2787-2820 (2020).

\bibitem{CCK}
\textsc{M. Colombo, R. Colombo, A. Kumar}: \emph{A convex integration scheme for the continuity equation past the Sobolev embedding threshold}. \url{https://arxiv.org/abs/2504.03578}

\bibitem{CCSo}
\textsc{M. Colombo, G. Crippa, M. Sorella}: \emph{Anomalous dissipation and lack of selection in the Obukhov-Corrsin theory of scalar turbulence}. Ann. PDE $\mathbf{9}$, 21 (2023).


\bibitem{Cort}
\textsc{T. Cortopassi}: \emph{An explicit Euler method for Sobolev vector fields with applications to the continuity equation on non cartesian grids}. J. Math. Pures Appl. $\mathbf{199}$, 103722 (2025).

\bibitem{CDL} 
\textsc{G. Crippa, C. De Lellis}: \emph{Estimates and regularity results for the DiPerna-Lions flow.} J. Reine Angew. Math. $\mathbf{616}$, 15-46 (2008).

\bibitem{dam} 
\textsc{R. D’Ambrosio}: \emph{Numerical Approximation of Ordinary Differential Problems. From Deterministic to Stochastic Numerical Methods.} Springer (2023).

\bibitem{DLG} 
\textsc{C. De Lellis, V. Giri}: \emph{Smoothing does not give a selection principle for transport equations with bounded autonomous fields}. Ann. Math. Qu\'ebec $\mathbf{46}$, 27–39 (2022). 

\bibitem{DPL} 
\textsc{R. J. DiPerna, P.-L. Lions}: \emph{Ordinary differential equations, transport theory and Sobolev spaces.} Invent. Math. $\mathbf{98}$, 511-547 (1989).

\bibitem{DPL2}
\textsc{R. J. DiPerna, P.-L. Lions}: \emph{On the Cauchy problem for Boltzmann equations: global existence and weak stability}. Ann. of Math. $\mathbf{130}$, 321-366 (1989). 


\bibitem{DPL3}
\textsc{R. J. DiPerna, P.-L. Lions}: \emph{Global weak solutions of kinetic equations}. Rend. Semin. Mat. Univ. Politec. Torino $\mathbf{46}$, 259-288 (1990).

\bibitem{DPM} 
\textsc{R. J. DiPerna, A. Majda}: \emph{Concentrations in regularizations for 2-D incompressible flow}. Comm. Pure Appl. Math. $\mathbf{40}$, 301-345 (1987).

\bibitem{F} 
\textsc{E. Feireisl}: \emph{Dynamics of viscous compressible fluids}. Oxford Lecture Series in Mathematics and its Applications, 26. Oxford University Press, Oxford, 2004.

\bibitem{FKP}
\textsc{U. S. Fjordholm, K. H. Karlsen, P. H. C. Pang}: \emph{Convergent finite difference schemes for stochastic transport equations}. SIAM J. Numer. Anal. $\mathbf{63}$ (2025).

\bibitem{Lucio}
\textsc{L. Galeati}: \emph{Almost-everywhere uniqueness of Lagrangian trajectories for 3D Navier--Stokes revisited}. J. Math. Pures Appl. $\mathbf{200}$, 103723 (2025).

\bibitem{GS}
\textsc{V. Giri, M. Sorella}: \emph{Non-uniqueness of integral curves for autonomous Hamiltonian vector fields}. Differential Integral Equations $\mathbf{35(7/8)}$, 411-436 (2022).

\bibitem{grothendieck1956theorie}
\textsc{A. Grothendieck}: \emph{La théorie de Fredholm}. Bull. Soc. Math. France $\mathbf{84}$, 319-384 (1956).

\bibitem{HS}
\textsc{L. Huysmans, A. R. Said}: \emph{Mixing Estimates for Passive Scalar Transport by BV Vector Fields}. \url{https://arxiv.org/abs/2504.03023}

\bibitem{JabinZhou2024}
\textsc{P.-E. Jabin, D. Zhou}: \emph{Discretizing advection equations with rough velocity fields on non-{C}artesian grids}. Quart. Appl. Math. $\mathbf{82}$ (2), 229-303 (2024).

\bibitem{Kumar}
\textsc{A. Kumar}: \emph{Nonuniqueness of trajectories on a set of full measure for Sobolev vector fields}. Arch. Rational Mech. Anal. $\mathbf{248}$, 114 (2024).


\bibitem{L1} \textsc{P.-L. Lions}, \emph{Mathematical topics in fluid mechanics. Vol. 1. Incompressible models}. Oxford Lecture Series in Mathematics and its Applications, 3. Oxford Science Publications. The Clarendon Press, Oxford University Press, New York, 1996.

\bibitem{L2} \textsc{P.-L. Lions}, \emph{Mathematical topics in fluid mechanics. Vol. 2. Compressible models}. Oxford Lecture Series in Mathematics and its Applications, 10. Oxford Science Publications. The Clarendon Press, Oxford University Press, New York, 1998.


\bibitem{MS3} 
\textsc{S. Modena, G. Sattig}: \emph{Convex integration solutions to the transport equation with full dimensional concentration.} Ann. Inst. H. Poincaré C Anal. Non Linéaire $\mathbf{37}$, no. 5, 1075–1108 (2020).
	
\bibitem{MS} 
\textsc{S. Modena, L. Sz\`ekelyhidi Jr}: \emph{Nonuniqueness for the transport equation with Sobolev vector fields.} Ann. PDE $\mathbf{4}$, 18 (2018).
	


\bibitem{Navarro2023}
\textsc{V. Navarro-Fern\'andez, A. Schlichting}: \emph{Error estimates for a finite volume scheme for advection-diffusion equations with rough coefficients}. ESAIM Math. Model. Numer. Anal. $\mathbf{57}$ (4), 2131-2158 (2023).
 

\bibitem{Navarro2022}
\textsc{V. Navarro-Fern\'andez, A. Schlichting, C. Seis}: \emph{Optimal stability estimates and a new uniqueness result for advection-diffusion equations}. Pure Appl. Anal. $\mathbf{4}$ (3), 571-596 (2022).

\bibitem{SP}
\textsc{J. Pitcho, M. Sorella}: \emph{Almost everywhere nonuniqueness of integral curves for divergence-free sobolev vector fields}. SIAM J. Math. Anal. $\mathbf{55}$(5), 4640-4663 (2023).



\bibitem{SeisSchlichting2017}
\textsc{A. Schlichting, C. Seis}: \emph{Convergence rates for upwind schemes with rough coefficients}. SIAM J. Numer. Anal. $\mathbf{55}$, 812-840 (2017).

\bibitem{SeisSchlichting2018}
\textsc{A. Schlichting, C. Seis}: \emph{Analysis of the implicit upwind finite volume scheme with rough coefficients}. Numerische Mathematik $\mathbf{139}$, 155-186 (2018).


\bibitem{Seis2017}
\textsc{C. Seis}: \emph{A quantitative theory for the continuity equation}. Ann. Inst. H. Poincar\'e{} C Anal. Non Lin\'eaire $\mathbf{34}$, 1837--1850 (2017).


\bibitem{Stein}
\textsc{E. M. Stein}:
\emph{Singular integrals and differentiability properties of functions}. Princeton Mathematical Series, 1970.


\bibitem{Walkington2005}
\textsc{N. J. Walkington}: \emph{Convergence of the discontinuous {G}alerkin method for discontinuous solutions}. SIAM J. Numer. Anal. $\mathbf{42}$ (5), 1801-1817 (2005).


\end{thebibliography}
\end{document}